\newcommand{\un}{\mathbf{1}}
\newcommand{\A}{\mathbf{A}}
\newcommand{\C}{\mathbf{C}}
\renewcommand{\H}{\mathbf{H}}
\newcommand{\N}{\mathbf{N}}
\renewcommand{\P}{\mathbf{P}}
\newcommand{\Q}{\mathbf{Q}}
\newcommand{\Z}{\mathbf{Z}}
\newcommand{\sA}{\mathcal{A}}
\newcommand{\sH}{\mathcal{H}}
\newcommand{\sK}{\mathcal{K}}
\newcommand{\sX}{\mathcal{X}}
\newcommand{\X}{{\overline X}}
\DeclareFontFamily{U}{wncy}{}
\DeclareFontShape{U}{wncy}{m}{n}{%
<5>wncyr5%
<6>wncyr6%
<7>wncyr7%
<8>wncyr8%
<9>wncyr9%
<10>wncyr10%
<11>wncyr10%
<12>wncyr6%
<14>wncyr7%
<17>wncyr8%
<20>wncyr10%
<25>wncyr10}{}
\DeclareMathAlphabet{\cyr}{U}{wncy}{m}{n}
\newcommand{\Spec}{{\operatorname{Spec \ }}}
\newcommand{\Br}{\operatorname{Br}}
\newcommand{\Ker}{\operatorname{Ker}}
\newcommand{\Coker}{\operatorname{Coker}}
\newcommand{\IM}{\operatorname{Im}}
\newcommand{\lcm}{\operatorname{lcm}}
\newcommand{\norm}{{\operatorname{norm}}}
\newcommand{\tr}{{\operatorname{tr}}}
\newcommand{\Tor}{\operatorname{Tor}}
\newcommand{\Hom}{\operatorname{Hom}}
\newcommand{\Albv}{\operatorname{Alb}}
\newcommand{\NS}{\operatorname{NS}}
\newcommand{\Pic}{\operatorname{Pic}}
\newcommand{\cl}{{\operatorname{cl}}}
\newcommand{\Ab}{\operatorname{\bf Ab}}
\newcommand{\car}{\operatorname{char}}
\newcommand{\tors}{{\operatorname{tors}}}
\newcommand{\alg}{{\operatorname{alg}}}
\newcommand{\nr}{{\operatorname{nr}}}
\newcommand{\Sm}{\operatorname{\bf Sm}}
\newcommand{\proj}{{\operatorname{proj}}}
\newcommand{\Chow}{\operatorname{\bf Chow}}
\newcommand{\et}{{\operatorname{\acute{e}t}}}
\renewcommand{\o}{{\operatorname{o}}}
\newcommand{\eff}{{\operatorname{eff}}}
\newcommand{\by}{\xrightarrow}
\newcommand{\iso}{\by{\sim}}
\newcommand{\inj}{\hookrightarrow}
\newcommand{\surj}{\rightarrow\!\!\!\!\!\rightarrow}
\newcommand{\Surj}{\relbar\joinrel\surj} 
\newcommand{\colim}{\varinjlim}
\renewcommand{\lim}{\varprojlim}
\newcommand{\gr}{\operatorname{gr}}
\renewcommand{\qed}{\hfill $\Box$\medskip}
\renewcommand{\phi}{\varphi}
\renewcommand{\epsilon}{\varepsilon}
\newcounter{spec}
\newenvironment{thlist}{\begin{list}{\rm{(\roman{spec})}}%
{\usecounter{spec}\labelwidth=20pt\itemindent=0pt\labelsep=10pt}}%
{\end{list}}%
\numberwithin{equation}{section}
\newtheorem{thm}{Theorem}[section]
\newtheorem{lemma}[thm]{Lemma}
\newtheorem{prop}[thm]{Proposition}
\newtheorem{cor}[thm]{Corollary}
\theoremstyle{definition}
\newtheorem{defn}[thm]{Definition}
\newtheorem{ass}[thm]{Assumption}
\newtheorem{rk}[thm]{Remark}
\newtheorem{rks}[thm]{Remarks}
\newtheorem{qn}[thm]{Question}
\newtheorem{exs}[thm]{Examples}
\begin{document}
\title[Torsion orders]{Torsion order of smooth projective surfaces}
\author[Bruno Kahn]{Bruno Kahn\\with an appendix by J.-L. Colliot-Th\'el\`ene}
\address{IMJ-PRG\\ Case 247\\4 place Jussieu\\
75252 Paris Cedex 05\\France}
\email{bruno.kahn@imj-prg.fr}
\address{CNRS,  Universit\'e Paris Sud\\Math\'ematiques, B\^atiment 425\\91405 Orsay Cedex\\France}
\email{jlct@math.u-psud.fr}
\date{February 27, 2017}
\begin{abstract}
To a smooth projective variety $X$ whose Chow group of $0$-cycles is $\Q$-universally trivial one can associate its torsion order $\Tor(X)$, the smallest multiple of the diagonal appearing in a cycle-theoretic decomposition à la Bloch-Srinivas.  We show that $\Tor(X)$ is the exponent of the torsion in the Néron-Severi group of $X$ when $X$ is a surface over an algebraically closed field $k$, up to a power of the exponential characteristic of $k$.
\end{abstract}
\maketitle

\tableofcontents

\subsection*{Introduction}

\enlargethispage*{20pt}

Let $X$ be a smooth projective irreducible variety over a field $k$. Assume that $CH_0(X_{k(X)})\otimes \Q\iso \Q$: this is the strongest case of ``decomposition of the diagonal'' \`a la Bloch-Srinivas \cite{bs}. To $X$ is associated its \emph{torsion order} $\Tor(X)$, the smallest multiple of the diagonal of $X$ appearing in such a decomposition (Definition \ref{dtors}). This number is also studied by Chatzistamatiou and Levine in \cite{ch-le}.

The integer $\Tor(X)$ kills all normalised motivic birational invariants of smooth projective varieties in the sense of Definition \ref{d1} (Lemma \ref{l4}). In particular, away from $\car k$, the exponent of the torsion subgroup of the geometric N\'eron-Severi group of $X$ divides $\Tor(X)$ (Corollary \ref{c1}); the main result of this paper is that we have equality when $X$ is a surface and $k$ is algebraically closed: this result was announced in \cite[Remark 3.1.5 3)]{birat-pure}. In the special case where $\Tor(X)=1$, it was obtained previously in \cite{voisin2} and \cite{auel} (see also Theorem \ref{torsion} in the appendix).

The equality follows from a short exact sequence (Corollary \ref{c1} a)):
\begin{multline}\label{eq00}
0\to  CH^2(X_{k(X)})_\tors \to \Tor(H^2(X),H^3(X))^2\\
\to H^3_\nr(X\times X,\Q/\Z(2))\to 0
\end{multline}
where $H^*(X)$ is Betti cohomology of $X$ with integer coefficients in characteristic $0$ (for simplicity; in positive characteristic, use $l$-adic cohomology). It also shows that $CH^2(X_{k(X)})_\tors$ is finite (away from the characteristic of $k$), with a very explicit bound.\footnote{It would be interesting to completely determine $CH^2(X_{k(X)})_\tors$: for example, when $X$ is an Enriques surface and $\car k=0$, is it $\Z/2$ or $(\Z/2)^2$?} 

The exact sequence \eqref{eq00} is a special case of a more general one appearing in Theorem \ref{p2}, which implies in particular the finiteness of $CH^2(X_{k(Y)})_\tors$ for any other smooth projective $Y$, and an explicit bound on its order. See Theorem \ref{finitude} for another proof of this finiteness, and a different bound.

\subsection*{Acknowledgements} I thank Arnaud Beauville, Luc Illusie and especially Jean-Louis Colliot-Thélène for helpful exchanges. (See Remark \ref{r1} for further comments.)

\section{Basic properties of the torsion order} 

\subsection{Review of birational motives} We fix a base field $k$, and write $\Sm^\proj=\Sm^\proj(k)$ for the category of smooth projective $k$-varieties. Recall from \cite{birat-pure} the category $\Chow^\o(k,A)$ of birational Chow motives with coefficients in a commutative ring $A$: there is a commutative diagram of functors
\[\xymatrix{
\Sm(k)\ar[r]^-{h}\ar[rd]^-{h^\o}&\Chow^\eff(k,A)\ar[d]\\
&\Chow^\o(k,A)
}\]
where $\Chow^\eff(k,A)$ is the covariant category of effective Chow motives with coefficients in $A$ (opposite to that of \cite{scholl}), and Hom groups in $\Chow^\o(k,A)$ are characterized by the formula
\[\Chow^\o(k,A)(h^\o(Y),h^\o(X))=CH_0(X_{k(Y)})\otimes A\]
for $X,Y\in \Sm^\proj(k)$ (with $Y$ irreducible). When $A=\Z$, we simplify the notation to $\Chow^\eff(k),\Chow^\o(k)$, or even $\Chow^\eff,\Chow^\o$.

\subsection{Motivic birational invariants} Let $X\in \Sm^\proj(k)$ be irreducible, with $CH_0(X_{k(X)})\allowbreak\otimes \Q\iso \Q$: this condition is equivalent to Bloch-Srinivas' decomposition of the diagonal relative to a closed subset of dimension $0$ \cite{bs}. By \cite[Prop. 3.1.1]{birat-pure}, this means that the birational motive $h^\o(X)$ of $X$ in the category $\Chow^\o(k,\Q)$ is trivial, i.e., that the projection map $h^\o(X)\to h^\o(\Spec k)=:\un$ is an isomorphism in $\Chow^\o(k,\Q)$. Then $CH_0(X_K)\allowbreak\otimes \Q\iso \Q$ for any field extension $K$ of $k$ (loc. cit., Condition (vi)). 

To such an $X$, we want to associate a numerical invariant. To motivate it, let us introduce a definition:

\begin{defn}\label{d1} A \emph{motivic invariant} of smooth projective varieties with values in an additive category $\sA$ is a functor $F:\Sm^\proj\to \sA$ which factors through an additive functor $\Chow^\eff\to \sA$. We say that $F$ is \emph{birational} if it further factors through $\Chow^\o$. The invariant $F$ is \emph{normalised} if $F(\Spec k)=0$.
\end{defn}

\begin{rk} If $X,Y\in \Sm^\proj$ are (stably) birationally equivalent, then $h^\o(X)\simeq h^\o(Y)$ in $\Chow^\o$ \cite[Prop. 2.3.8]{birat-pure}. Hence, to be a motivic birational invariant is stronger than to be a (stable) birational invariant. It is much stronger: $h^\o(S)\iso \un$ for $S$ the Barlow surface \cite{barlow1}, a complex surface of general type.
\end{rk}

\begin{exs}\label{ex0} a) For any cycle module $M_*$ in the sense of Rost \cite{rost}, any $K\supseteq k$ and any $n\in\Z$, $X\mapsto A^0(X_K,M_n)$ (resp. $X\mapsto A_0(X_K,M_n)$) defines a contravariant (resp. covariant) motivic birational invariant with values in $\Ab$, the category of abelian groups \cite[Cor. 6.1.3]{birat-pure}.\\ 
b) In particular, for $M_*=K_*^M$ (Milnor $K$-theory), the functor $X\mapsto A_0(X_K,M_0)=CH_0(X_K)$ is a motivic birational invariant. When $K=k(Y)$ for some $Y\in \Sm^\proj$, this is also obvious by the intepretation of $CH_0(X_K)$ as $\Chow^\o(h^\o(Y),h^\o(X))$.\\
c) Given a contravariant motivic invariant $F$, we get two (contravariant) normalised invariants by the formulas
\[\underline{F}(X) = \Ker(F(k)\to F(X)), \quad \overline{F}(X) =  \Coker(F(k)\to F(X))\]
and similarly for covariant motivic invariants:
\[\underline{F}(X) = \Coker(F(X)\to F(k)), \quad \overline{F}(X) =  \Ker(F(X)\to F(k)).\]
They are birational if $F$ is birational.\\
d) Suppose that $F$ is a motivic invariant with values in the category of $\Z[1/p]$-modules, where $p$ is the exponential characteristic of $k$ (or, more generally, in a $\Z[1/p]$-linear additive category); assume $F$ contravariant to fix ideas. Then $F$ is birational if and only if, for any $Y\in \Sm^\proj$, the map $F(Y)\to F(Y\times \P^1)$ is an isomorphism. This follows from \cite[Th. 2.4.2]{birat-pure}.
\end{exs}

\begin{defn} The category $\Chow^\o_\norm$ is the quotient of $\Chow^\o$ by the ideal generated by $\un$.
\end{defn}

Thus a motivic birational invariant is normalised if and only if it factors through $\Chow^\o_\norm$.

Let $M,N\in \Chow^\o$. By definition, $\Chow^\o_\norm(M,N)$ is the quotient of $\Chow^\o(M,N)$ by the group of morphisms $f:M\to N$ which factor through $\un$. If $M=h^\o(Y)$ and $N=h^\o(X)$, this gives
\[\Chow^\o_\norm(h^\o(Y),h^\o(X))\simeq \Coker(CH_0(X)\to CH_0(X_{k(Y)})).\]

\subsection{The torsion order} If now the birational motive of $X$ is trivial in $\Chow^\o(k,\Q)$, then the image of $h^\o(X)$ in $\Chow^\o_\norm$ is torsion; in other words, there is an integer $n>0$ such that $n 1_{h^\o(X)}=0$ in $\Chow^\o_\norm(h^\o(X),h^\o(X))$.

\begin{defn}\label{dtors} The smallest such integer $n$ is called the \emph{torsion order} of $X$, and denoted by $\Tor(X)$. We extend this to arbitrary (connected) $X$ by setting $\Tor(X)=0$ if $h^\o(X)$ is not trivial in $\Chow^\o(k,\Q)$.\\
If $p$ is the exponential characteristic of $k$, we write $\Tor^p(X)$ for the part of $\Tor(X)$ which is prime to $p$ (so $\Tor^p(X)=\Tor(X)$ if $\car k=0$).
\end{defn}

In $\Chow^\o$, the identity morphism $1_{h^\o(X)}$ is given by $\eta_X\in CH_0(X_{k(X)})$, where $\eta_X$ is the generic point viewed as a closed point of $X_{k(X)}$. This gives a concrete description of the torsion order: 

\begin{lemma}\label{l4} Suppose that $CH_0(X_{k(X)})\otimes \Q\iso \Q$. Then  the torsion order of $X$ is the order $n$ of $\eta_X$ in the group $CH_0(X_{k(X)})/CH_0(X)$ (it is $0$ if and only if $\eta_X$ has infinite order). Moreover, we have $nF(X)=0$ for any normalised motivic birational invariant $F$. In particular,
\[nCH_0(X_K)_0=n \frac{CH_0(X_K)}{CH_0(X)}
=0 \quad \text{for any } K\supseteq k\]
where $CH_0(X_K)_0=\Ker(CH_0(X_K)\by{\deg} \Z)$.
\end{lemma}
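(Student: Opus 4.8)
The plan is to read everything off the identifications set up in the two preceding subsections; I do not expect a genuine difficulty here, since the statement is essentially formal — the only things requiring a little care are the bookkeeping of variance and a routine normalisation check for the two auxiliary functors that appear at the end.

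First I would pin down the description of $\Tor(X)$. By the paragraph just before the lemma, under $\Chow^\o(h^\o(X),h^\o(X))=CH_0(X_{k(X)})$ the identity morphism $1_{h^\o(X)}$ corresponds to $\eta_X$, and under the displayed isomorphism $\Chow^\o_\norm(h^\o(X),h^\o(X))\simeq\Coker(CH_0(X)\to CH_0(X_{k(X)}))$ it corresponds to the class of $\eta_X$ in $CH_0(X_{k(X)})/CH_0(X)$. Since $CH_0(X_{k(X)})\otimes\Q\iso\Q$, the motive $h^\o(X)$ is trivial in $\Chow^\o(k,\Q)$, so its image in $\Chow^\o_\norm$ is torsion, and $\Tor(X)$ — the least $n>0$ with $n1_{h^\o(X)}=0$ in that group — is precisely the order of $\eta_X$ in $CH_0(X_{k(X)})/CH_0(X)$. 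For the parenthetical, conversely: finiteness of that order makes $1_{h^\o(X)}$ torsion in $\Chow^\o_\norm$, hence $h^\o(X)$ trivial in $\Chow^\o(k,\Q)$ (the pertinent $\Hom$-spaces there being one-dimensional over $\Q$), so by Definition \ref{dtors} one has $\Tor(X)=0$ exactly when $\eta_X$ has infinite order.

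Next I would show that $n:=\Tor(X)$ annihilates every normalised motivic birational invariant $F$. By Definition \ref{d1}, $F$ factors through an additive functor $\bar F$ on $\Chow^\o_\norm$ (with values in $\sA$, or in $\sA^\op$ in the contravariant case — this changes nothing), and $F(X)=\bar F(h^\o(X))$, $1_{F(X)}=\bar F(1_{h^\o(X)})$. Applying $\bar F$ to the relation $n1_{h^\o(X)}=0$ and using additivity gives $n1_{F(X)}=0$, i.e. $nF(X)=0$.

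Finally I would deduce the $CH_0$ assertions by exhibiting the relevant invariants. Fix $K\supseteq k$. By Examples \ref{ex0}(b), $X\mapsto CH_0(X_K)$ is a covariant motivic birational invariant, hence by Examples \ref{ex0}(c) so is $X\mapsto\Ker\bigl(CH_0(X_K)\to CH_0(\Spec K)\bigr)$; the transition map is induced by the structure morphism $X\to\Spec k$, hence is the degree map $CH_0(X_K)\to\Z$, so this invariant is $X\mapsto CH_0(X_K)_0$, which the previous step kills. Likewise $X\mapsto CH_0(X_K)/CH_0(X)=\Coker\bigl(CH_0(X)\to CH_0(X_K)\bigr)$ is the objectwise cokernel of a morphism between covariant motivic birational invariants, hence is again one (cokernels commute with finite biproducts, so the objectwise cokernel is still additive and still factors through $\Chow^\o$), and it is normalised because $CH_0(\Spec k)\to CH_0(\Spec K)$ is an isomorphism; so $n$ kills it too. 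The only point that genuinely needs to be checked is this last normalisation, and it is immediate — so there is no real obstacle, only the verifications above.
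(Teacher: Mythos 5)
Your proposal is correct and amounts to spelling out exactly what the paper dismisses as ``tautological'': you read off $\Tor(X)$ as the order of $\eta_X$ via the identification $\Chow^\o_\norm(h^\o(X),h^\o(X))\simeq CH_0(X_{k(X)})/CH_0(X)$, kill $F(X)$ by applying the additive factorisation through $\Chow^\o_\norm$ to $n1_{h^\o(X)}=0$, and obtain the $CH_0$ statements by exhibiting $X\mapsto CH_0(X_K)_0$ and $X\mapsto CH_0(X_K)/CH_0(X)$ as normalised motivic birational invariants via Examples \ref{ex0}~b), c). This is the paper's intended argument, just written out in full, so there is nothing to correct.
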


\begin{proof} The first and second statements are tautological; the third follows as a special case of the second.
\end{proof}

\subsection{Torsion order and index} Another important invariant is

\begin{defn} The \emph{index} of an irreducible $X\in \Sm^\proj$ is the positive generator of $\IM(\deg:CH_0(X)\to \Z)$. We denote it by $I(X)$.
\end{defn}

\begin{prop} Let $X\in \Sm^\proj$, irreducible. Write $n$ for its torsion order and $d$ for its index.\\
a) If $F$ is a motivic invariant and $\underline{F}$ is as in Example \ref{ex0} c), then we have $d\underline{F}(X)=0$.\\
b) $n$ is divisible by $d$.\\
c) Suppose $CH_0(X_{k(X)})\otimes \Q\iso \Q$. If $x\in CH_0(X)$ is an element of degree $d$, then $m(x_{k(X)}-d\eta_X)=0$ in $CH_0(X_{k(X)})$ for some $m>0$, and $n\mid md$.\\
d) If $d=1$ and $m$ is minimal in c), then $n=m$.
\end{prop}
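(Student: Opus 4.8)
The plan is to derive all four parts from two inputs: the index $d$ produces a zero-cycle of degree $d$ on $X$, which splits the structure morphism $h(X)\to\un$ in $\Chow^\eff$ up to multiplication by $d$; and Lemma \ref{l4}, which tells us (when $CH_0(X_{k(X)})\otimes\Q\iso\Q$) that $n=\Tor(X)$ is the order of $\eta_X$ in $CH_0(X_{k(X)})/CH_0(X)$ and that $n$ annihilates every normalised motivic birational invariant, in particular that $n\,CH_0(X_K)_0=0$ for every $K\supseteq k$.

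\emph{Part a).} Choose $x\in CH_0(X)$ with $\deg x=d$. Via the identification $CH_0(X)=\Chow^\eff(\un,h(X))$, the cycle $x$ is a morphism $\sigma\colon\un\to h(X)$; composing with the structure morphism $\pi\colon h(X)\to\un$ yields $\pi\sigma=d\cdot 1_\un$, since the composite correspondence computes $\deg x$. Applying the additive functor through which $F$ factors, one gets a pair of maps between $F(k)=F(\un)$ and $F(X)=F(h(X))$ whose composite is multiplication by $d$; a one-line diagram chase — take the kernel if $F$ is contravariant, the cokernel if $F$ is covariant — then gives $d\,\underline F(X)=0$. The only delicate point is the bookkeeping of variances, so I would treat the two cases explicitly.

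\emph{Parts b) and c).} For b): if $h^\o(X)$ is not $\Q$-trivial then $n=0$ and the claim is vacuous, so assume $n>0$, whence $CH_0(X_{k(X)})\otimes\Q\iso\Q$ and Lemma \ref{l4} applies; then $n\eta_X$ lies in the image of $CH_0(X)\to CH_0(X_{k(X)})$, and taking degrees (which commute with base change) shows $n=\deg(n\eta_X)\in d\Z$. (Equivalently, $\underline{CH_0}(X)=\Z/d$ is a normalised motivic birational invariant, hence killed by $n$.) For c): the cycle $x_{k(X)}-d\eta_X$ has degree $0$, so lies in $CH_0(X_{k(X)})_0$; the hypothesis $CH_0(X_{k(X)})\otimes\Q\iso\Q$ together with the degree-$1$ point $\eta_X$ forces $CH_0(X_{k(X)})_0\otimes\Q=0$, so $CH_0(X_{k(X)})_0$ is torsion and $m(x_{k(X)}-d\eta_X)=0$ for some $m>0$. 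Then $(mx)_{k(X)}=md\,\eta_X$, so $md\,[\eta_X]=0$ in $CH_0(X_{k(X)})/CH_0(X)$, and Lemma \ref{l4} gives $n\mid md$.

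\emph{Part d).} When $d=1$, the element $x_{k(X)}-\eta_X$ has degree $0$, hence lies in $CH_0(X_{k(X)})_0$, which is annihilated by $n$ according to Lemma \ref{l4} (this uses the hypothesis of c)). Therefore $n(x_{k(X)}-\eta_X)=0$, and minimality of $m$ gives $m\mid n$; combined with $n\mid md=m$ from part c), we get $n=m$. I do not expect a genuine obstacle here: once Lemma \ref{l4} and the correspondence description of $CH_0$ are in place, the arguments are short, and the only steps that need care are the variance bookkeeping in a) and the observation in d) that the hypothesis $d=1$ upgrades the two-sided divisibility $m\mid n\mid md$ into an equality.
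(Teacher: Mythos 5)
Your proof is correct and follows essentially the paper's own route: part a) is the same correspondence argument with a degree-$d$ zero-cycle giving $\pi\circ x=d$, and parts b)--d) rest, as in the paper, on Lemma \ref{l4} and the degree map. The only cosmetic difference is that you argue directly with degrees where the paper packages the same information into the exact sequence \eqref{eq6}, and in d) you note at once that $x_{k(X)}-\eta_X\in CH_0(X_{k(X)})_0$ is killed by $n$ for \emph{any} degree-one $x$, which slightly streamlines the paper's argument.
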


\begin{proof} a) Suppose $F$ is contravariant. Let $\alpha\in F(k)$ be such that $\pi^*\alpha=0$, where $\pi:X\to \Spec k$ is the structural morphism. If $x\in CH_0(X)$ is an element of degree $d$, it defines a morphism $x:\un\to h^\o(X)$ such that $\pi\circ x = d$. Hence $d\alpha=0$. 

b) A diagram chase yields an exact sequence
\begin{equation}\label{eq6}
CH_0(X)_0\to CH_0(X_{k(X)})_0\to \frac{CH_0(X_{k(X)})}{CH_0(X)}\to \Z/d\to 0
\end{equation}
where $CH_0(X_K)_0$ was defined in Lemma \ref{l4} and the last map sends the class of $\eta_X$ to $1$.

c) The first claim follows from \eqref{eq6}, and the second follows from pushing this identity in $CH_0(X_{k(X)})/CH_0(X)$.

d) If $d=1$, \eqref{eq6} yields a surjection
\[CH_0(X_{k(X)})_0\Surj \frac{CH_0(X_{k(X)})}{CH_0(X)}.\]

Let $y\in CH_0(X_{k(X)})_0$ mapping to the class of $\eta_X$. This means that $\eta_X-y =x_{k(X)}$ for some $x\in CH_0(X)$, and necessarily $\deg(x)=1$. By Lemma \ref{l4}, we have $ny=0$ so the conclusion is true for this choice of $x$. But if $x'\in CH_0(X)$ is of degree $1$, then $n(x'-x)=0$ hence the conclusion remains true when replacing $x$ by $x'$.
\end{proof}

\begin{rk} When $d=1$, we can avoid the recourse to the category $\Chow^\o_\norm$: in this case, the morphism $h^\o(X)\to \un$ is (noncanonically) split, hence we may consider its kernel $h^\o(X)_0\in \Chow^\o$. The endomorphism ring of this birational motive is canonically isomorphic to $CH_0(X_{k(X)})/CH_0(X)$.
\end{rk}

\subsection{Change of base field and products}

\begin{prop} Let $K/k$ be a field extension. Then:\\
a) $\Tor(X_K)\mid \Tor(X)$.\\
b) If $k$ and $K$ are algebraically closed, then $\Tor(X_K)= \Tor(X)$.
\end{prop}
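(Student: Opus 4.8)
The plan is to obtain a) as a formal consequence of flat pullback of $0$-cycles, and b) by reformulating $\Tor$ as the least multiplier in a Bloch--Srinivas decomposition of the diagonal and then running a spreading-out/specialisation argument.

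\emph{Part a).} If $h^\o(X)$ is not $\Q$-universally trivial then $\Tor(X)=0$ and there is nothing to prove. Otherwise $CH_0(X_{k(X)})\otimes\Q\iso\Q$, hence by Condition (vi) of \cite{birat-pure} (quoted above), applied to the extension $K(X_K)\supseteq k$, also $CH_0((X_K)_{K(X_K)})\otimes\Q\iso\Q$, so $\Tor(X_K)$ is finite and positive (and in particular $X_K$ is a variety, $X$ being geometrically connected). Now $(X_K)_{K(X_K)}=X_{k(X)}\times_{k(X)}K(X_K)$ gives a flat pullback $CH_0(X_{k(X)})\to CH_0((X_K)_{K(X_K)})$ carrying $\eta_X$ to $\eta_{X_K}$ and carrying the image of $CH_0(X)$ into that of $CH_0(X_K)$. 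Hence, by Lemma \ref{l4}, $n\eta_X\in\IM(CH_0(X))$ implies $n\eta_{X_K}\in\IM(CH_0(X_K))$, so $\Tor(X_K)\mid\Tor(X)$.

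\emph{Part b), reformulation and reduction.} Put $d=\dim X$. Using the localisation presentation $CH_0(X_{k(X)})=\colim_{Z}CH_d(X\times(X\setminus Z))$ (colimit over proper closed $Z\subsetneq X$, transition maps being restrictions), under which $[\Delta_X]$ maps to $\eta_X$ and $CH_0(X)\to CH_0(X_{k(X)})$ is $p_1^*$ followed by the projection, Lemma \ref{l4} translates into: for $n>0$, $\Tor(X)\mid n$ if and only if there is a decomposition
\[ n[\Delta_X]=p_1^*\alpha+\beta\quad\text{in }CH_d(X\times X),\qquad \alpha\in CH_0(X),\quad \beta\ \text{supported on}\ X\times Z, \]
for some proper closed $Z\subsetneq X$ (Bloch--Srinivas); moreover $\Tor(X)=0$ exactly when no such decomposition exists for any $n$. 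Since the argument of a) also yields $\Tor(X_K)=0\Rightarrow\Tor(X)=0$, I may assume $n:=\Tor(X_K)>0$, fix such a decomposition of $n[\Delta_{X_K}]$ over $K$, and it then suffices to produce one over $k$ with the same $n$: this gives $\Tor(X)\mid n$, whence equality by a).

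\emph{Part b), spreading out and specialisation.} Because $k$ is algebraically closed, $X$ is geometrically integral. The chosen decomposition over $K$ involves only finitely many cycles and rational equivalences, so, Chow groups commuting with filtered colimits of the base, it is already defined over a finitely generated subextension $L=k(V)$ of $K$, with $V$ a smooth affine integral $k$-variety ($k$ being perfect). Spreading it out over $V$: after shrinking $V$ to a dense open $U$, I obtain a closed $\mathcal{Z}\subseteq X\times_k U$ all of whose fibres over $U$ have dimension $\le d-1$ (shrinking $U$ via upper semicontinuity of fibre dimension, the generic fibre being $Z_L\subsetneq X_{k(V)}$), a cycle $\mathcal{A}$ on $X\times_k U$ finite over $U$, and a cycle $\mathcal{B}$ supported over $\mathcal{Z}$, realising a relative identity $n[\Delta_X\times U]=p_1^*\mathcal{A}+\mathcal{B}$ in $CH_{d+\dim U}(X\times_k X\times_k U)$ that restricts over the generic point of $V$ to the decomposition over $L$. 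Finally, pick a closed point $o\in U$: its residue field is $k$, and $\{o\}\hookrightarrow U$ is a regular closed immersion, so applying the refined Gysin map $i_o^!$ (compatible with flat pullback and with proper pushforward) to the relative identity gives
\[ n[\Delta_X]=p_1^*\mathcal{A}_o+\mathcal{B}_o\quad\text{in }CH_d(X\times X),\]
with $\mathcal{A}_o\in CH_0(X)$ the fibre of $\mathcal{A}$ over $o$ and $\mathcal{B}_o$ supported on $X\times\mathcal{Z}_o$, where $\mathcal{Z}_o\subsetneq X$ is proper since $\dim\mathcal{Z}_o\le d-1<d$. This is the required decomposition over $k$.

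The step I expect to be the real obstacle is this spreading-out: beyond the bookkeeping of cycles and rational functions as the base varies, the crucial point is to guarantee that, after specialising the base to $o$, the error term $\mathcal{B}_o$ is still supported on $X\times Z$ with $Z\subsetneq X$ \emph{proper} — which is exactly why one must shrink $U$ using upper semicontinuity of fibre dimension and invoke compatibility of the Gysin map with proper pushforward. The preliminary reductions (that $X_K$ is a variety, that $L$ has a smooth model) are harmless because $k$ is algebraically closed, hence perfect.
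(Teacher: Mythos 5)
Your proof is correct, but it takes a genuinely different route from the paper. For b) the paper simply invokes Lecomte's rigidity theorem for torsion in Chow groups \cite{lecomte}: the class $\eta_X - x$ is torsion, and torsion in Chow groups is insensitive to extensions of algebraically closed fields, so the proof is one line (part a) being the "obvious" functoriality you also give, via base change of birational motives or, equivalently, your flat-pullback argument). You instead reformulate $\Tor(X)\mid n$ as the existence of a Bloch--Srinivas decomposition $n[\Delta_X]=p_1^*\alpha+\beta$ with $\beta$ supported over a proper closed subset, descend such a decomposition from $K$ to a finitely generated subextension $k(V)$, spread it out over a smooth dense open $U\subseteq V$, and specialise with the refined Gysin map at a $k$-point of $U$; the points you flag (shrinking $U$ so the fibres of $\mathcal{Z}$ stay of dimension $\le d-1$, compatibility of $i_o^!$ with flat pullback and proper pushforward, smoothness of $U$ over the perfect field $k$) are exactly the ones that need care, and you handle them correctly. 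What your approach buys: it is self-contained modulo Fulton-style intersection theory rather than relying on the rigidity theorem, it is in effect the specialisation argument for torsion orders (in the spirit of Chatzistamatiou--Levine \cite{ch-le} and Colliot-Th\'el\`ene--Pirutka), and it proves something slightly stronger than stated, namely $\Tor(X_K)=\Tor(X)$ for an \emph{arbitrary} extension $K$ of an algebraically closed field $k$, since nowhere do you use that $K$ is algebraically closed. What the paper's route buys is brevity: given \cite{lecomte}, there is essentially nothing to check.
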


\begin{proof} a) is obvious, and b) follows from the rigidity theorem for torsion in Chow groups \cite{lecomte}.
\end{proof}

\begin{prop} For any connected $X,Y\in \Sm^\proj$, $\Tor(X\times Y)\mid \Tor(X)\Tor(Y)$.
\end{prop}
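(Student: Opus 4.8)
The plan is to exploit the (symmetric) monoidal structure that $\Chow^\o$ inherits from $\Chow^\eff$ via the projection functor $\Chow^\eff\to\Chow^\o$, for which $h^\o(X\times Y)=h^\o(X)\otimes h^\o(Y)$ and $\un=h^\o(\Spec k)$ is the unit object (see \cite{birat-pure}); then $\otimes$ is biadditive and $1_{h^\o(X\times Y)}=1_{h^\o(X)}\otimes 1_{h^\o(Y)}$. First I dispose of the degenerate case: if $\Tor(X)=0$ or $\Tor(Y)=0$ then $\Tor(X)\Tor(Y)=0$, and there is nothing to prove since every integer divides $0$. So assume $n:=\Tor(X)>0$ and $m:=\Tor(Y)>0$; then $h^\o(X)\iso\un$ and $h^\o(Y)\iso\un$ in $\Chow^\o(k,\Q)$, hence $h^\o(X\times Y)=h^\o(X)\otimes h^\o(Y)\iso\un\otimes\un\iso\un$ there as well, so that $\Tor(X\times Y)$ is a well-defined positive integer.

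By Definition \ref{dtors}, $n\cdot 1_{h^\o(X)}$ factors through $\un$ in $\Chow^\o(k)$, say $n\cdot 1_{h^\o(X)}=b_X\circ a_X$ with $a_X\colon h^\o(X)\to\un$ and $b_X\colon\un\to h^\o(X)$, and similarly $m\cdot 1_{h^\o(Y)}=b_Y\circ a_Y$. Using biadditivity of $\otimes$ and the interchange law,
\[nm\cdot 1_{h^\o(X\times Y)}=\bigl(n\cdot 1_{h^\o(X)}\bigr)\otimes\bigl(m\cdot 1_{h^\o(Y)}\bigr)=(b_X\otimes b_Y)\circ(a_X\otimes a_Y),\]
and since $a_X\otimes a_Y$ has target $\un\otimes\un\iso\un$ while $b_X\otimes b_Y$ has source $\un\otimes\un\iso\un$, this endomorphism of $h^\o(X\times Y)$ factors through $\un$, hence vanishes in $\Chow^\o_\norm$. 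As $\{N\in\Z: N\cdot 1_{h^\o(X\times Y)}=0 \text{ in } \Chow^\o_\norm\}$ is the ideal of $\Z$ generated by $\Tor(X\times Y)$, we conclude $\Tor(X\times Y)\mid nm=\Tor(X)\Tor(Y)$.

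The only non-formal input is the existence of the monoidal structure on $\Chow^\o$ with $h^\o(X\times Y)=h^\o(X)\otimes h^\o(Y)$; granting that, the rest is a routine diagram chase, so the substance of the argument lies in invoking \cite{birat-pure} correctly. Alternatively one could stay within $CH_0$ and use Lemma \ref{l4}, writing the class of $\eta_{X\times Y}$ in $CH_0((X\times Y)_{k(X\times Y)})/CH_0(X\times Y)$ in terms of $\eta_X$ and $\eta_Y$ through the two projections and the inclusions $k(X),k(Y)\subseteq k(X\times Y)$, but bookkeeping these base-change fields makes the computation clumsier, so I would favour the categorical proof above.
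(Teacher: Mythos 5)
Your proof is correct and follows essentially the same route as the paper: handle the degenerate case $\Tor(X)=0$ or $\Tor(Y)=0$ trivially, then factor $n\cdot 1_{h^\o(X)}$ and $m\cdot 1_{h^\o(Y)}$ through $\un$ and tensor the factorisations so that $nm\cdot 1_{h^\o(X\times Y)}$ factors through $\un\otimes\un=\un$. Your write-up merely spells out the monoidal structure and the interchange law that the paper leaves implicit.
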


\begin{proof} If $\Tor(X)=0$ or $\Tor(Y)=0$, this is obvious. Otherwise, let $m>0$ (resp. $n>0$) be such that $m1_{h^\o(X)}$ (resp. $n1_{h^\o(Y)}$) factors through $\un$. Then $mn1_{h^\o(X\times Y)}=m1_{h^\o(X)}\otimes n1_{h^\o(Y)}$ factors through $\un\otimes \un=\un$. 
\end{proof}

\section{Torsion order for cycle modules} 

For any abelian group $A$, write
\[\exp(A)=\inf\{m>0\mid mA=0\}\]
and, by convention, $\exp(A)=0$ if no such integer $m$ exists. Also write
\[\exp^p(A)=\exp(A[1/p]).\]

\subsection{General case}  We refine the notion of torsion order as follows:

\begin{defn}\label{d2.1} Let $M$ be a cycle module. For $X\in \Sm^\proj$, $K\supseteq k$ and $n\in\Z$, write $F_n(X_K) = \Coker(M_n(K)\to A^0(X_K,M_n))$: then $X\mapsto F_n(X_K)$ is a normalised motivic birational invariant in the sense of Definition \ref{d1}. We set
\begin{align*}
\Tor_K(X,M_n) &= \exp(F_n(X_K)),\\
\Tor(X,M_n)&=\lcm_{K\supseteq k} \Tor_K(X,M_n),\\
\Tor(X,M)&=\lcm_{n} \Tor(X,M_n).
\end{align*}
where $\lcm$ means lower common multiple.\end{defn}

By Lemma \ref{l4}, $\Tor_K(X,M_n)\mid\Tor(X,M_n)\mid \Tor(X)$. Moreover,

\begin{lemma} $ \Tor(X,M_{n-1})\mid \Tor(X,M_n)$.
\end{lemma}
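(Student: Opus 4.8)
The plan is to use the contraction/residue structure of cycle modules, which relates $M_n$ and $M_{n-1}$ via the boundary maps in localization sequences. Recall that for a cycle module $M_*$ over a field, there is for every discrete valuation a residue map $\partial: M_n \to M_{n-1}$ of the residue field, and more importantly, for a smooth variety there is for each $n$ a natural transformation (the contraction, or "specialization along a unit") that will allow me to compare the functors $A^0(-,M_n)$ and $A^0(-,M_{n-1})$ after multiplying by $\bG_m$, i.e. I want to exploit $\bP^1$ or $\bG_m$ to shift the degree.

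First I would recall Rost's construction: for a cycle module $M$ and the variety $\bG_m = \Spec k[t,t^{-1}]$, one has a canonical direct sum decomposition coming from the localization sequence for $\bP^1 \supset \bG_m \supset \{0,\infty\}$, yielding for any smooth $X$ a split injection $A^0(X_K, M_{n-1}) \hookrightarrow A^0((X\times\bG_m)_K, M_n)$, realized concretely by $a \mapsto \{t\}\cdot a$ (cup product with the class of the coordinate function $t\in K(t)^\times = K_1^M(K(t))$, using the $K^M_*$-module structure of $M$). The retraction is given by the residue $\partial_{t=0}$ at $t=0$ (or a suitable combination of residues). This map is natural in $X$ and compatible with the maps from the point, hence induces a split injection on the normalized quotients $F_{n-1}(X_K) \hookrightarrow F_n((X\times\bG_m)_K)$.

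Second, I would combine this with two facts already available in the paper. By Example \ref{ex0}(a), $X \mapsto F_n(X_K)$ is a normalized motivic birational invariant, so by Lemma \ref{l4} it is killed by $\Tor(X)$ — but more to the point, $\Tor(X\times\bG_m, M_n) \mid \Tor(X\times\bP^1, M_n)$, and since $h^\o(X\times\bP^1)\simeq h^\o(X)$ in $\Chow^\o$ (birational invariance, as $A^0(-,M_n)$ is a birational invariant, or directly from \cite[Th. 2.4.2]{birat-pure}), one gets $\Tor(X\times\bP^1, M_n) = \Tor(X, M_n)$. Chaining: $\exp(F_{n-1}(X_K))$ divides $\exp(F_n((X\times\bG_m)_K))$ which divides $\Tor(X\times\bG_m, M_n)$ which divides $\Tor(X\times\bP^1,M_n) = \Tor(X,M_n)$; taking $\lcm$ over $K$ and using that $\Tor(X,M_{n-1}) = \lcm_K \exp(F_{n-1}(X_K))$ gives $\Tor(X,M_{n-1}) \mid \Tor(X,M_n)$.

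The main obstacle I expect is the bookkeeping around the $\bG_m$-decomposition: verifying that the map $a\mapsto\{t\}\cdot a$ lands in $A^0$ (codimension-zero cycles, i.e. it is genuinely unramified on $X\times\bG_m$ — one must check residues at all codimension-one points of $X_K$ vanish, which follows since $t$ is a unit there and $a$ is already unramified), that it is split injective after passing to the quotient by the constants from $\Spec k$, and that naturality in $X$ and in field extensions $K$ is exactly compatible with the formation of $F_n$ and of $\lcm_K$. An alternative, perhaps cleaner, route avoiding the explicit $\bG_m$-gymnastics would be to invoke directly the contraction functor $M \mapsto M_{-1}$ from \cite{rost} together with the fact that $A^0(X, M)_{-1} = A^0(X, M_{-1})$ compatibly with the point, so that $F_{n-1} = (F_n)_{-1}$ as functors and contraction is exact and commutes with the relevant operations; then $\Tor(X,M_{n-1})\mid\Tor(X,M_n)$ is immediate from $\exp(A_{-1})\mid\exp(A)$ applied functorially. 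I would likely present the contraction-functor version in the text, as it is shortest, and this is presumably what the author does.
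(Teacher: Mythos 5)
Your main route has a genuine gap at the step
$\exp(F_n((X\times\bG_m)_K))\mid \Tor(X\times\bG_m,M_n)\mid \Tor(X\times\bP^1,M_n)=\Tor(X,M_n)$.
First, a small but real point: the map $a\mapsto\{t\}\cdot a$ does not descend to a map $F_{n-1}(X_K)\to F_n((X\times\bG_m)_K)$ as claimed, because the constants $M_{n-1}(K)$ are sent to $\{t\}\cdot M_{n-1}(K)$, which is not contained in the image of $M_n(K)$; the paper's $F_n$ normalises only by the point, not by $\bG_{m,K}$. Second, and fatally, $A^0(-,M_n)$ (and hence these torsion orders) is a birational invariant only among smooth \emph{proper} varieties; \cite[Th.~2.4.2]{birat-pure} concerns $Y\times\P^1\to Y$ and says nothing about $\bG_m$. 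Indeed $A^0((X\times\bG_m)_K,M_n)\simeq A^0(X_K,M_n)\oplus\{t\}\cdot A^0(X_K,M_{n-1})$, so $F_n((X\times\bG_m)_K)\simeq F_n(X_K)\oplus A^0(X_K,M_{n-1})$, and the second summand contains $\{t\}\cdot M_{n-1}(K)$, typically of infinite exponent. Already for $M=K^M$, $n=1$: $F_1((X\times\bG_m)_K)\supseteq \Z\cdot\{t\}$ while $F_1((X\times\bP^1)_K)=F_1(X_K)$ has finite exponent, so $\Tor(X\times\bG_m,M_1)=0$ does not divide $\Tor(X\times\bP^1,M_1)$. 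Worse, bounding the exponent of that extra summand \emph{modulo constants} is precisely the statement you are trying to prove, so the chain is circular even where it is not false.

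The fix is your parenthetical alternative, and it is essentially the paper's proof: enlarge the \emph{field}, not the variety. Since $K\mapsto A^0(X_K,M)$ is itself a cycle module, Rost's naturally split exact sequence \cite[Prop.~2.2]{rost}, namely $0\to A^0(X_K,M_n)\to A^0(X_{K(t)},M_n)\to\bigoplus_{x\in(\A^1_K)^{(1)}}A^0(X_{K(x)},M_{n-1})\to 0$, compared with the same sequence for $X=\Spec k$, exhibits $F_{n-1}(X_K)$ (take $x$ the rational point $t=0$) as a direct summand of $F_n(X_{K(t)})$. Since $K(t)\supseteq k$ is one of the fields occurring in the $\lcm$ defining $\Tor(X,M_n)$, this gives $\exp(F_{n-1}(X_K))\mid\exp(F_n(X_{K(t)}))\mid\Tor(X,M_n)$ at once, with no properness issue and no need for the undefined invariants $\Tor(X\times\bG_m,M_n)$. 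Note also that your shortcut ``$F_{n-1}=(F_n)_{-1}$ and contraction commutes with everything'' still needs exactly this input (the cycle-module structure on $A^0(X_{-},M)$ and the split sequence), so you should write up that version rather than the $\bG_m$ one.
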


\begin{proof} Let $K/k$ be an extension. We have a naturally split exact sequence (\cite[Prop. 2.2]{rost} and its proof):
\[0\to A^0(X_K,M_n)\to A^0(X_{K(t)},M_n)\to \bigoplus_{x\in (\A^1_K)^{(1)}} A^0(X_{K(x)},M_{n-1})\to 0.\]

Indeed, $K\mapsto A^0(X_K,M_n)$ defines a cycle module. Comparing with the same exact sequence for $X=\Spec k$, we get the conclusion.
\end{proof}

\subsection{Unramified cohomology of degree $\le 2$}\label{s2.2} For $K\supseteq k$, we write $\bar K$ for an algebraic closure of $K$ and $G_K=Gal(\bar K/K)$. Let $p$ be the exponential characteristic of $k$. We compute $\Tor(X,\sH_n)$ for low values of $n$, where $\sH_n$ is the cycle module $K\mapsto H^n_\et(K,(\Q/\Z)'(n-1))$ with $(\Q/\Z)'(n-1):=\colim_{(m,p)=1} \mu_m^{\otimes n-1}$. As is well-known, we have
\[A^0(X_K,\sH_n)=
\begin{cases}
H^0(K,(\Q/\Z)'(-1)) &\text{for $n=0$}\\
H^1(X_K,(\Q/\Z)') &\text{for $n=1$}\\
\Br(X_K)[1/p]&\text{for $n=2$.}
\end{cases}
\]

 Let $X$ be such that $CH_0(X_{k(X)})\otimes \Q\iso \Q$;  then $b^1(X)=0$ and $b^2(X)=\rho(X)$ where $b^i(X)$ (resp. $\rho(X)$) denotes the $i$-th Betti number (resp. the Picard number) of $X$ \cite[Prop. 3.1.4 3)]{birat-pure}. In particular, we have  $\Pic^0_{X/k}=0$ and for any $K\supseteq k$, $H^1(X_{\bar K},(\Q/\Z)')\iso \NS(X_{\bar k})_\tors[1/p]$ and similarly $\Br(X_{\bar K})\{l\}\allowbreak\iso H^3_\et(X_{\bar K},\Z_l)_\tors$ for $l\ne p$, so $\Br(X_{\bar k})[1/p]\iso \Br(X_{\bar K})[1/p]$. (We neglected Tate twists in these computations.)
 
In the sequel, we abbreviate $X_{\bar k}$ to $\X$; for simplicity, we assume $I(X)=1$ so that $H^i(K,(\Q/\Z)'(j))\to H^i(X_K,(\Q/\Z)'(j))$ is split injective for any $K,i,j$. The Hochschild-Serre spectral sequence then gives isomorphisms (see Definition \ref{d2.1} for the notation $F_n$):
\[F_0(X_K)=0,\quad F_1(X_K) = (\NS(\X)_\tors[1/p])^{G_K} \]
and an exact sequence
\begin{equation}\label{eq8}
0\to H^1(K,\NS(\X))[1/p]\to F_2(X_K)\to (\Br(\X)[1/p])^{G_K}.
\end{equation}

For $K\supseteq \bar k$, $G_K$ acts trivially on $\NS(\X)$ and $\Br(\X)$. Then $H^1(K,\NS(\X))\allowbreak =\Hom(G_K,\NS(\X)_\tors)$ and the last map in \eqref{eq8} is split surjective: indeed, $\Br(\X)[1/p]$ maps to $F_2(X_K)$ by functoriality. This yields:

\begin{prop}\label{p1} Let $X$ be such that $I(X)=1$ and $CH_0(X_{k(X)})\otimes \Q\iso \Q$. Then 
\begin{align*}
\Tor(X,\sH_0)&=1\\
\Tor(X,\sH_1)&=\exp^p(\NS(\X)_\tors)\\
\Tor(X,\sH_2)&= \lcm(\exp^p(\NS(\X)_\tors),\exp^p(\Br(\X))
\end{align*}
 In particular, $\Tor(X)$ is divisible by $\exp^p(\NS(\X)_\tors)$ and $\exp^p(\Br(\X))$. \qed
\end{prop}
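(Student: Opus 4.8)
The plan is to compute $\exp(F_n(X_K))$ for $n=0,1,2$ and every $K\supseteq k$ from the descriptions of $F_n$ recalled just before the statement, and then take the $\lcm$ over $K$ as prescribed by Definition \ref{d2.1}. Since the three right-hand sides involve only $\X=X_{\bar k}$, the decisive fields are those with $K\supseteq\bar k$: for such $K$ the group $G_K$ fixes $\bar k$ pointwise, hence acts trivially on $\NS(\X)$ and on $\Br(\X)$, and the formulas collapse.

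For $n=0$ there is nothing to prove: $F_0(X_K)=0$ for all $K$, so $\Tor(X,\sH_0)=1$. For $n=1$: for $K\supseteq\bar k$ the action is trivial and $\Hom(G_K,\Z)=0$, so the free part of $\NS(\X)$ drops out and $F_1(X_K)=\Hom(G_K,\NS(\X)_\tors)[1/p]$; in general $F_1(X_K)=(\NS(\X)_\tors[1/p])^{G_K}$ is a subgroup of $\NS(\X)_\tors[1/p]$, whence $\exp(F_1(X_K))\mid\exp^p(\NS(\X)_\tors)$ for every $K$. The $\lcm$ attains $\exp^p(\NS(\X)_\tors)$ already at $K=\bar k$, where $G_K=1$ and $F_1(X_{\bar k})=\NS(\X)_\tors[1/p]$. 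Hence $\Tor(X,\sH_1)=\exp^p(\NS(\X)_\tors)$.

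For $n=2$ I would combine the exact sequence \eqref{eq8} with the split surjectivity of its last map (valid for $K\supseteq\bar k$) into one observation: $F_2(X_K)$ is then the \emph{direct sum} $\Hom(G_K,\NS(\X)_\tors)[1/p]\oplus\Br(\X)[1/p]$, so $\exp(F_2(X_K))=\lcm\bigl(\exp(\Hom(G_K,\NS(\X)_\tors)[1/p]),\,\exp^p(\Br(\X))\bigr)$, a divisor of $\lcm(\exp^p(\NS(\X)_\tors),\exp^p(\Br(\X)))$; taking $\lcm$ over $K$ gives one direction. For the other, the factor $\exp^p(\Br(\X))$ is realised at $K=\bar k$ (there $F_2(X_{\bar k})=\Br(\X)[1/p]$), and $\exp^p(\NS(\X)_\tors)$ is realised at $K=\bar k(t)$, because $H^1(\bar k(t),\mu_m)=\bar k(t)^\times/(\bar k(t)^\times)^m$ surjects onto $\Z/m$ for every $m$ prime to $p$, so $\Hom(G_{\bar k(t)},-)$ detects the full prime-to-$p$ exponent of $\NS(\X)_\tors$ (equivalently, use the earlier divisibility $\Tor(X,\sH_1)\mid\Tor(X,\sH_2)$ together with the previous paragraph). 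This yields $\Tor(X,\sH_2)=\lcm(\exp^p(\NS(\X)_\tors),\exp^p(\Br(\X)))$, and the final assertion follows since $\Tor(X,\sH_n)\mid\Tor(X)$ (noted after Definition \ref{d2.1}).

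Once the $F_n$-formulas are granted the argument is bookkeeping; the content is in two inputs, the delicate one being the split surjectivity of the last arrow of \eqref{eq8}. It is exactly what makes $F_2(X_K)$ a direct sum rather than a possibly non-split extension — without it one would only extract the \emph{product} $\exp^p(\NS(\X)_\tors)\cdot\exp^p(\Br(\X))$ as an upper bound — and it rests on the hypothesis $I(X)=1$, via the degree-one zero-cycle splitting the Hochschild-Serre edge maps (the same splitting that kills the differentials which would otherwise spoil the clean description of $F_1$). The second input, realising $\exp^p(\NS(\X)_\tors)$, only uses that $\bar k(t)$ carries cyclic extensions of every order prime to $p$. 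Finally, it is essential that $G_K$ act trivially on $\NS(\X)$ and $\Br(\X)$, i.e. that the relevant $K$ contain $\bar k$; over a more general base the free part of $\NS(\X)$ re-enters through $H^1(K,\NS(\X))$ and the clean formula fails.
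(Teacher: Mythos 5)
Your argument is the paper's own: compute $F_0,F_1,F_2$ via Hochschild--Serre using the splitting provided by $I(X)=1$, evaluate at fields $K\supseteq\bar k$ where $G_K$ acts trivially on $\NS(\X)$ and $\Br(\X)$ and the last map of \eqref{eq8} is split surjective by functoriality, and realise the exponents at $K=\bar k$ (for $\Br(\X)$) and through $\Hom(G_K,\NS(\X)_\tors)$ for $K=\bar k(t)$, equivalently through the divisibility $\Tor(X,\sH_1)\mid\Tor(X,\sH_2)$ --- this last point being exactly what the paper leaves implicit. One slip to fix: for $K\supseteq\bar k$ one has $F_1(X_K)=(\NS(\X)_\tors[1/p])^{G_K}=\NS(\X)_\tors[1/p]$, not $\Hom(G_K,\NS(\X)_\tors)[1/p]$ (that group is the kernel term $H^1(K,\NS(\X))[1/p]$ of \eqref{eq8}, relevant to $F_2$); since the rest of your $n=1$ paragraph uses the correct invariants formula, the conclusion is unaffected.
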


(Of course, one could recover this conclusion directly by considering the normalised motivic birational functors $X\mapsto \NS(\X)_\tors[1/p]$ and $X\mapsto \Br(\X)[1/p]$.)

\begin{rk} When $k$ is algebraically closed, the above computation yields $\Tor_k(X,\sH_1)=\exp^p(\NS(X)_\tors)$ and $\Tor_k(X,\sH_2)=\exp^p(\Br(X))$.
\end{rk}

When $\dim X=2$, $\exp^p(\NS(\X)_\tors)=\exp^p(\Br(\X))$ by Poincar\'e duality. We shall see in Corollary \ref{c1} that, then, $\Tor^p(\X)= \exp^p(\NS(\X)_\tors)\allowbreak=\exp^p(\Br(\X))$. In view of Proposition \ref{p1}, this also yields 
\begin{equation}\label{eq14}
\Tor^p(X)=\Tor(X,\sH) \quad \text{ if } \dim X\le 2.
\end{equation}

\begin{qn} Is the equality  \eqref{eq14} true in general? In other words, does the cycle module $\sH_*$ always compute the torsion index?
\end{qn} 

\section{Extension of functors} 

\begin{defn} \label{d0} If $F$ is a contravariant functor from smooth $k$-schemes of finite type to abelian groups, we extend it to smooth $k$-schemes essentially of finite type by the formula
\begin{equation}\label{eq5}
\tilde F(X)=\colim_\sX F(\sX)
\end{equation}
where $\sX$ runs through the smooth models of finite type of $X/k$. 
\end{defn}

Note that if $F(X)=A^n_\alg(X)$, then $F$ is defined on all smooth $k$-schemes (not necessarily of finite type), but does not commute with filtering colimits; so the natural map
\[\tilde A^n_\alg(X)\to A^n_\alg(X)\]
is not an isomorphism in general, see \cite[Rk. 2.3.10 2)]{birat-pure}. By contrast, we have:

\begin{lemma}\label{l0} For any cycle module $M$, the functors $A^p(-,M_q)$ of \S \ref{s4} below commute with filtering colimits of smooth schemes.
\end{lemma}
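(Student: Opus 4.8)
The plan is to unwind the definition of $A^p(X,M_q)$ as the cohomology of Rost's cycle complex and then reduce to formal properties of filtered colimits. Recall \cite{rost} that $A^p(X,M_q)=H^p(C^*(X,M_q))$, with
\[ C^p(X,M_q)=\bigoplus_{x\in X^{(p)}}M_{q-p}(\kappa(x)) \]
and differentials assembled from the residue maps and transfers of $M$. Since filtered colimits of abelian groups are exact, $H^p$ commutes with filtered colimits of cochain complexes, and the differentials of $C^*(-,M_q)$ are compatible with the colimit maps by functoriality; so it is enough to show that $X\mapsto C^p(X,M_q)$ commutes with filtered colimits of smooth $k$-schemes. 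Writing such a colimit as a cofiltered limit $X=\lim_i X_i$ of smooth $k$-schemes of finite type with affine transition morphisms --- in the situation of \S\ref{s4} these are open immersions and $X$ is essentially of finite type --- the goal becomes $C^p(X,M_q)=\colim_i C^p(X_i,M_q)$.

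I would then assemble the following standard ingredients. First, $|X|=\lim_i|X_i|$ as topological spaces, each point $x$ of $X$ coming from a unique compatible family $(x_i)$; and since the transition maps preserve the codimension of a point (trivially so for open immersions, as $\codim(x,X)=\dim\mathcal{O}_{X,x}$ is read off the local ring), this restricts to a bijection $X^{(p)}\iso\colim_i X_i^{(p)}$. Second, $\kappa(x)=\colim_i\kappa(x_i)$ --- when the transition maps are open immersions the residue field does not change at all, which is why no further input is needed in \S\ref{s4}; in general one uses that a cycle module is continuous in the field variable, so that $M_{q-p}(\kappa(x))=\colim_i M_{q-p}(\kappa(x_i))$. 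Third, a filtered colimit commutes with arbitrary direct sums, and any element of $C^p(X,M_q)$ has finite support, hence comes from some $C^p(X_i,M_q)$. Combining these,
\[ C^p(X,M_q)=\bigoplus_{x\in X^{(p)}}\colim_i M_{q-p}(\kappa(x_i))=\colim_i\bigoplus_{x_i\in X_i^{(p)}}M_{q-p}(\kappa(x_i))=\colim_i C^p(X_i,M_q), \]
the structure maps of the colimit being precisely the flat (here open-immersion, \resp smooth) pullbacks of the cycle complex; one checks this identification of maps directly from the definitions.

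The step most deserving of attention is the compatibility of the homeomorphism $|X|=\lim_i|X_i|$ with the codimension filtrations, \ie that $X^{(p)}=\colim_i X_i^{(p)}$: one must see both that a point of $X^{(p)}$ already has codimension $p$ at some finite stage and that a point of $X_i^{(p)}$ whose image in $X$ has smaller codimension has dropped codimension already at some finite stage. In the essentially-of-finite-type case needed for \S\ref{s4} this is transparent because the transition maps are open immersions, so that the lemma there amounts to a reindexing. The other ingredients --- exactness of filtered colimits, continuity of cycle modules in the field variable, and the interchange of $\bigoplus$ with $\colim$ --- are formal, the first two being classical.
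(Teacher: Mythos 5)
Your argument is correct and follows the same route as the paper: the paper's proof simply observes that the statement holds at the level of Rost's cycle complexes and hence for their cohomology, which is exactly the reduction you carry out (you merely spell out the standard details about points, codimension, residue fields, and exactness of filtered colimits that the paper leaves as ``obvious'').
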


\begin{proof} This is obvious, since the same is true for the cycle complexes of \cite{rost}.
\end{proof}

As a special case, one recovers the commutation of Chow groups with filtering colimits \cite[Lemma 1A.1]{bloch}.

\section{The Rost spectral sequence}\label{s4} Let $M$ be a cycle module. For any smooth $X/k$, recall its \emph{cycle cohomology with coefficients in $M$}:
\[A^p(X,M_q)=H^p(\dots\to \bigoplus_{x\in X^{(p)}} M_{q-p}(k(x))\to \dots)\]
where the differentials are defined through Rost's axioms. We assume:
\begin{thlist}
\item $M_n=0$ for $n<0$;
\item $M_0(K)=A$ for any $K/k$, where $A$ is a torsion-free abelian group.
\end{thlist}

By Rost's axioms \cite{rost}, there is then a canonical homomorphism of cycle modules
\[K^M\otimes A\to M\]
where $K^M$ is the cycle module given by Milnor's $K$-theory. 
For any $n\ge 0$, this yields a \emph{surjective homomorphism}
\begin{equation}\label{eq4.1}
CH^n(X)\otimes A=A^n(X,K_n^M\otimes A)\Surj A^n(X,M_n)=:A^n_M(X).
\end{equation}

We may thus think of $A^n_M(X)$ as the group of cycles of codimension $n$ modulo ``$M$-equivalence''.

\begin{exs}\label{ex1} 1) For $M=K^M\otimes A$, we get $A^n_M(X)=CH^n(X)\otimes A$.\\
2) Let $H$ be Betti cohomology (in characteristic $0$) or $l$-adic cohomology (in characteristic $\ne l$): in the first case, let $A=\Z$ and in the second case let $A=\Z_l$. For a function field $K/k$, set
\[\H_n(K) := \tilde H^n(\Spec K,A(n))\]
see Definition \ref{d0}. (This is not the cycle module $\sH$ considered in Subsection \ref{s2.2}.) By \cite[Th. 7.3]{bo} and \cite[proof of Prop. 4.5]{cycletale}, one has
\[A^n_\H(X)=A^n_\alg(X)\otimes A\]
where $A^n_\alg(X)$ is the group of cycles of codimension $n$ on $X$, modulo algebraic equivalence.
\end{exs}

We now take two smooth $k$-varieties $X,Y$, and study the Rost spectral sequence \cite[Cor. 8.2]{rost} attached to the first projection $\pi:Y\times X\to Y$:
\begin{equation}\label{eqrost1}
E_1^{p,q}(r) = \bigoplus_{y\in Y^{(p)}} A^q(X_{k(y)},M_{r-p})\Rightarrow A^{p+q}(Y\times X,M_r)
\end{equation}
abutting to the coniveau filtration on $A^{p+q}(Y\times X,M_r)$ with respect to $Y$. Note that $A^q(X_{k(y)},M_{r-p})=0$ for $p+q>r$ by Condition (i) on $M$, hence $E_1^{p,q}(r)=0$ in that range.  

Take $r=2$: we only have to consider $p+q\le 2$.  By definition, we have for a function field $K/k$ (see \eqref{eq4.1} for the notation $A^q_M$):
\[A^q(X_K,M_q) =
\colim_U A^q_M(X\times U)=: \tilde A^q_M(X_K)
\]
where $U$ runs through smooth models of $K$ as above (see Lemma \ref{l0}).  This yields 
\begin{align*}
E_2^{0,2}(2)&= \tilde A^2_M(X_{k(Y)}) \\
E_2^{1,1}(2) &= \Coker\Big(A^1(X_{k(Y)},M_2)\to \bigoplus_{y\in Y^{(1)}} \tilde A^1_M(X_{k(y)})\Big)\\ 
E_2^{2,0}(2) &= \Coker\Big(\bigoplus_{y\in Y^{(1)}} A^0(X_{k(y)},M_1)\to Z^2(Y)\otimes A\Big).
\end{align*}

The latter group is a quotient of $A^2_M(Y)$ (consider the maps $M_1(k(y))\allowbreak \to A^0(X_{k(y)},M_1)$). If $X$ has a $0$-cycle of degree $1$, the map $A^2_M(Y)\to A^2_M(Y\times X)$ is split, hence $\pi^*:A^2_M(Y)\to E_2^{2,0}(2)$ is an isomorphism. Thus $E_2=E_\infty$ in the Rost spectral sequence. We summarise this discussion:

\begin{prop}\label{p0} Let $\gr^*_Y A^2_M(X\times Y)$ be the associated graded to the coniveau filtration relative to $Y$. Assume that $X$ has a $0$-cycle of degree $1$. Then we have isomorphisms
\begin{align*}
\gr^0_Y A^2_M(X\times Y)&= \tilde A^2_M(X_{k(Y)})\\
\gr^1_Y A^2_M(X\times Y)&= \Coker(A^1(X_{k(Y)},M_2)\to \bigoplus_{y\in Y^{(1)}} \tilde A^1_M(X_{k(y)}))\\
\gr^2_Y A^2_M(X\times Y)&= A^2_M(Y).
\end{align*}
Moreover, we have an exact sequence:
\begin{multline}\label{eq6a}
0\to A^1(Y,M_2)\to A^1(Y\times X,M_2)\to A^1(X_{k(Y)},M_2)\\
\to \bigoplus_{y\in Y^{(1)}} \tilde A^1_M(X_{k(y)})\to A^2_M(Y\times X)/A^2_M(Y).
\end{multline}
\end{prop}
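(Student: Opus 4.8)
The plan is to read off everything from the Rost spectral sequence \eqref{eqrost1} with $r=2$, using that under the hypothesis $I(X)=1$ we have shown $E_2=E_\infty$.  Since $E_1^{p,q}(2)=0$ for $p+q>2$, the spectral sequence degenerates onto the three lines $p+q\le 2$, and the coniveau filtration on $A^n_M(Y\times X)$ relative to $Y$ therefore has at most $n+1$ nonzero graded pieces for $n\le 2$.  The three displayed isomorphisms for $\gr^j_Y A^2_M(X\times Y)$ are just the identifications $\gr^j = E_\infty^{j,2-j} = E_2^{j,2-j}$ computed above: for $j=0$ this is $E_2^{0,2} = \tilde A^2_M(X_{k(Y)})$ (kernel of the $d_1$ out of $E_1^{0,2}$, but that $d_1$ lands in $E_1^{1,2}=0$, and the incoming $d_1$ computes it as the stated group of sections); for $j=1$ it is the cohomology of $E_1^{0,2}\to E_1^{1,1}\to E_1^{2,1}=0$, i.e.\ the cokernel of $A^1(X_{k(Y)},M_2)\to\bigoplus_{y}\tilde A^1_M(X_{k(y)})$; and for $j=2$ it is $E_2^{2,0}$, which we already argued equals $A^2_M(Y)$ via the splitting induced by a degree-one zero-cycle on $X$.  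So the first half of Proposition \ref{p0} requires nothing beyond collecting the computations of $E_2^{p,q}(2)$ done immediately before the statement.

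For the exact sequence \eqref{eq6a}, I would run the analogous analysis one total degree lower, at $p+q=1$, and splice it with the degree-$2$ filtration.  In total degree $1$ the spectral sequence has only the two columns $p=0$ ($E_1^{0,1}=A^1(X_{k(Y)},M_2)$, truncated by the incoming $d_1$ from $E_1^{0,1}$'s predecessor — but note the $q$-index: with $r=2$ we want $A^1(Y\times X,M_2)$, so the relevant entries are $E_1^{0,1}=A^1(X_{k(Y)},M_2)$ and $E_1^{1,0}=\bigoplus_{y\in Y^{(1)}}A^0(X_{k(y)},M_1)$).  Thus the coniveau filtration on $A^1(Y\times X,M_2)$ gives a short exact sequence $0\to A^1(Y,M_2)\to A^1(Y\times X,M_2)\to \gr^0_Y A^1 \to 0$ with $\gr^0_Y A^1 = E_2^{0,1}(2) = \Ker\bigl(A^1(X_{k(Y)},M_2)\to\bigoplus_y \tilde A^1_M(X_{k(y)})\bigr)$, where I have used $\gr^1_Y A^1(Y\times X,M_2) = E_2^{1,0}(2) = A^1(Y,M_2)$, again by the degree-one-cycle splitting.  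Now concatenate: the inclusion $\gr^0_Y A^1(Y\times X,M_2)\inj A^1(X_{k(Y)},M_2)$ followed by the $d_1$-differential $A^1(X_{k(Y)},M_2)\to\bigoplus_y\tilde A^1_M(X_{k(y)})$ (whose image is by construction the kernel of the next map), followed by the map $\bigoplus_y\tilde A^1_M(X_{k(y)})\to \gr^1_Y A^2_M(Y\times X)\inj A^2_M(Y\times X)/A^2_M(Y)$ coming from the first part of the proposition, is exact at each spot by exactness of the $E_1$-complex in the row $q=1$ (which computes $E_2^{\bullet,1}$) together with the identification $\gr^1_Y A^2_M = E_2^{1,1}$.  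Prepending $0\to A^1(Y,M_2)\to A^1(Y\times X,M_2)\to A^1(X_{k(Y)},M_2)$ (exact by the short exact sequence above, since the composite into $A^1(X_{k(Y)},M_2)$ is the inclusion of $\gr^0$) yields exactly \eqref{eq6a}.

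The only genuine content is bookkeeping: one must be careful that the filtration on $A^2_M(Y\times X)$ is \emph{decreasing by coniveau relative to $Y$}, so that $\gr^0$ (the ``generic'' piece over $Y$) sits on top and $\gr^2 = A^2_M(Y)$ at the bottom, and conversely that the map $\bigoplus_y\tilde A^1_M(X_{k(y)})\to A^2_M(Y\times X)/A^2_M(Y)$ in \eqref{eq6a} is the composite $E_1^{1,1}\surj E_2^{1,1}=\gr^1_Y A^2_M \inj A^2_M(Y\times X)/(\text{coniveau}\ge 2)$ and that coniveau $\ge 2$ here is precisely $A^2_M(Y)=E_2^{2,0}$.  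I expect the main obstacle to be purely notational — matching the $E_2$-terms named in the lines above Proposition \ref{p0} to the edge maps of the filtration and checking that the differential $d_1\colon E_1^{0,1}(2)\to E_1^{1,1}(2)$ is the residue map $A^1(X_{k(Y)},M_2)\to\bigoplus_{y\in Y^{(1)}}\tilde A^1_M(X_{k(y)})$ appearing in the statement — but no new idea beyond the degeneration $E_2=E_\infty$ already established is needed.
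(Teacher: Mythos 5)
Your route is exactly the paper's: the statement is just a summary of the discussion preceding it (the computation of the $E_2$-terms of \eqref{eqrost1} for $r=2$ and the degeneration $E_2=E_\infty$ forced by the degree-one zero-cycle), and your reading of the graded pieces and your splice in total degrees $1$ and $2$ reproduce that discussion faithfully; the exactness at $A^1(X_{k(Y)},M_2)$ (via $d_2^{0,1}=0$) and at $\bigoplus_y\tilde A^1_M(X_{k(y)})$ (via $E_2^{1,1}=E_\infty^{1,1}=\gr^1$ and $F^2=A^2_M(Y)$) are argued correctly, modulo a harmless index typo ($E_1^{0,1}$, not $E_1^{0,2}$, maps to $E_1^{1,1}$).

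The one step where your justification does not deliver what you claim is the identification $E_2^{1,0}(2)=A^1(Y,M_2)$, which you assert ``again by the degree-one-cycle splitting''. For $E_2^{2,0}$ the paper's argument has two halves: $E_2^{2,0}$ is \emph{a priori a quotient} of $A^2_M(Y)$, because the last term of the $q=0$ row, $\bigoplus_{y\in Y^{(2)}}A^0(X_{k(y)},M_0)=Z^2(Y)\otimes A$, coincides with the corresponding term of the Rost complex of $Y$; the splitting then upgrades this surjection to an isomorphism. In position $(1,0)$ the first half has no analogue: $E_1^{1,0}=\bigoplus_{y\in Y^{(1)}}A^0(X_{k(y)},M_1)$ may be strictly larger than $\bigoplus_{y\in Y^{(1)}}M_1(k(y))$ (e.g.\ for the cycle module $\H$ of Example \ref{ex1}\,2) and $X$ with $b^1>0$), so the splitting only exhibits $A^1(Y,M_2)$ as a direct summand of $E_2^{1,0}$, the complement being $\Coker\bigl(\overline F_2(X_{k(Y)})\to\bigoplus_{y\in Y^{(1)}}\overline F_1(X_{k(y)})\bigr)$ with $\overline F_n(X_K)=\Coker(M_n(K)\to A^0(X_K,M_n))$. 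What the edge of the spectral sequence gives unconditionally is the sequence \eqref{eq6a} with first term $E_2^{1,0}=F^1A^1(Y\times X,M_2)$; to replace it by $A^1(Y,M_2)$, i.e.\ to get exactness at $A^1(Y\times X,M_2)$ with $\pi^*$ as first map, you need the additional observation that $A^0(X_{k(y)},M_1)=M_1(k(y))$ for $y\in Y^{(1)}$ — which does hold in the case actually used later ($M=K^M$, where $A^0(X_K,K_1)=H^0(X_K,\mathcal O^*)=K^*$ for $X$ proper and geometrically connected), so that the row-$0$ complexes agree in degrees $\ge 1$ and the $E_2^{2,0}$-style argument applies verbatim — or some substitute argument. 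So: same proof as the paper, but this identification needs to be argued (or the first term of \eqref{eq6a} stated as $E_2^{1,0}$), not deduced from the splitting alone.
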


\section{Trivial birational motives of surfaces}

We start with a special case of Proposition \ref{p0}:

\begin{thm}\label{t1} Suppose $k$ algebraically closed, and let $X/k$ be a smooth projective variety such that $\Pic^0_{X/k}\allowbreak =0$.
Then for any smooth $Y$, there is an exact sequence
\begin{multline}\label{eq0}
CH^2(Y)\oplus  \Pic(Y)\otimes\NS(X)\oplus CH^2(X)\\
\to CH^2(Y\times X)\to CH^2(X_{k(Y)})/CH^2(X) \to 0
\end{multline}
where the maps $CH^2(X),CH^2(Y)\to CH^2(Y\times X)$ are induced by the two projections, and the map $\Pic(Y)\otimes\NS(X)\to CH^2(Y\times X)$ is given by the cross-product of cycles. 
\end{thm}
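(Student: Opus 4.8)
The plan is to derive Theorem \ref{t1} as a specialization of Proposition \ref{p0}, applied to the cycle module $M = K^M$ (Milnor $K$-theory), so that $A^n_M(X) = CH^n(X)$ for all $X$. With this choice, $A^1(-, M_2)$ is the cycle cohomology $A^1(-, \mathcal{K}^M_2)$, which by the Bloch–Quillen/Gersten story is $H^1(-, \mathcal{K}_2)$, and $A^0(X_K, M_1) = K^M_1(K(X))$-type groups; but for the surface application the key simplification is that $X$ has a $0$-cycle of degree $1$ (automatic since $k$ is algebraically closed, so $I(X) = 1$), which is exactly the hypothesis needed to invoke Proposition \ref{p0}. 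First I would write down the five-term exact sequence \eqref{eq6a} for $M = K^M$, $r = 2$:
\[
0 \to A^1(Y, \mathcal{K}^M_2) \to A^1(Y \times X, \mathcal{K}^M_2) \to A^1(X_{k(Y)}, \mathcal{K}^M_2) \to \bigoplus_{y \in Y^{(1)}} \widetilde{CH}^1(X_{k(y)}) \to CH^2(Y \times X)/CH^2(Y) \to 0,
\]
together with the identification $\gr^0_Y CH^2(X\times Y) = \widetilde{CH}^2(X_{k(Y)})$ and the coniveau filtration having only three steps.

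The main work is then to unwind the middle terms under the hypothesis $\Pic^0_{X/k} = 0$. The term $\bigoplus_{y \in Y^{(1)}} \widetilde{CH}^1(X_{k(y)})$ should be rewritten: for each codimension-$1$ point $y$, $CH^1(X_{k(y)}) = \Pic(X_{k(y)})$, and since $k$ is algebraically closed with $\Pic^0_{X/k} = 0$, one has $\Pic(X_{k(y)}) = \NS(X_{\bar k}) = \NS(X)$ (the Picard scheme is étale, so there is no growth on passing to a function field, and $\widetilde{(-)}$ just takes the colimit which is already constant). Hence $\bigoplus_{y \in Y^{(1)}} \widetilde{CH}^1(X_{k(y)}) = \bigoplus_{y \in Y^{(1)}} \NS(X) = Z^1(Y) \otimes \NS(X)$, the group of codimension-$1$ cycles on $Y$ tensored with $\NS(X)$. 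The map from $A^1(X_{k(Y)}, \mathcal{K}^M_2)$ into this group is a residue/divisor-type map, and composing the surjection $Z^1(Y) \otimes \NS(X) \twoheadrightarrow \Pic(Y) \otimes \NS(X)$ with the last map of \eqref{eq6a}, I claim one gets a surjection $\Pic(Y) \otimes \NS(X) \oplus CH^2(Y) \to CH^2(Y\times X)/(\text{image of } \widetilde{CH}^2(X_{k(Y)}))$; the cokernel of everything coming from $Y$ and from the "mixed" term is then exactly $\gr^0_Y = \widetilde{CH}^2(X_{k(Y)})$, and one must further quotient by the image of $CH^2(X)$ (pullback along the second projection) to land in $\widetilde{CH}^2(X_{k(Y)})/CH^2(X)$. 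The point is that the composite $\bigoplus_y \NS(X) \to CH^2(Y\times X)$ agrees, modulo rational equivalence on $Y$, with the cross-product map $\Pic(Y) \otimes \NS(X) \to CH^2(Y \times X)$, $(D, c) \mapsto D \times c$: this is a compatibility between the Rost differential and the classical intersection product that needs to be checked.

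The hard part, I expect, will be exactly that last compatibility — identifying the connecting map $\bigoplus_{y \in Y^{(1)}} \widetilde{CH}^1(X_{k(y)}) \to CH^2(Y\times X)/CH^2(Y)$ in the Rost spectral sequence with the geometric cross-product of cycles, and checking that factoring through $\Pic(Y) \otimes \NS(X)$ is legitimate (i.e. that cycles $D$ rationally equivalent to zero on $Y$ contribute nothing new beyond what already lies in the image of the terms from $Y$). This amounts to tracking the $E_1 \Rightarrow E_\infty$ edge maps and using functoriality of the Rost complex under flat pullback and proper pushforward along $Y \times X \to Y$; I would handle it by reducing to the universal case $Y = $ a point or $Y$ a curve and invoking that $\widetilde{CH}^1(X_{k(y)})$ is generated by classes of the form $\{z\} \times (\text{subvariety of } X)$. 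Once this identification is in hand, the exactness of \eqref{eq0} at $CH^2(Y\times X)$ follows by splicing: the image of $CH^2(Y) \oplus \Pic(Y)\otimes\NS(X) \oplus CH^2(X)$ is precisely the sum of the subgroup coming from the coniveau filtration steps $\gr^1$, $\gr^2$ and from the second projection, and the quotient is $\widetilde{CH}^2(X_{k(Y)})/CH^2(X) = CH^2(X_{k(Y)})/CH^2(X)$ by Lemma \ref{l0}. Surjectivity onto this quotient is then immediate from $\gr^0_Y CH^2(X\times Y) = \widetilde{CH}^2(X_{k(Y)})$.
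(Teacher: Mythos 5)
Your proposal is correct in outline and is essentially the paper's own argument: Proposition \ref{p0} applied with $M=K^M$, the identification $\Pic(X_{k(y)})\iso\NS(X)$ for $y\in Y^{(1)}$ coming from $\Pic^0_{X/k}=0$ over an algebraically closed field, and then splicing the coniveau filtration. Where you differ is in the treatment of the step you call the hard part. The paper passes from $Z^1(Y)\otimes\NS(X)$ to $\Pic(Y)\otimes\NS(X)$ by a single observation: the natural product map $k(Y)^*\otimes\NS(X)\to A^1(X_{k(Y)},K_2)$, composed with the divisor map to $\bigoplus_{y\in Y^{(1)}}\Pic(X_{k(y)})=Z^1(Y)\otimes\NS(X)$, is $\div\otimes\mathrm{id}$, so principal divisors tensored with $\NS(X)$ already lie in the image of the $d_1$-differential and $E_2^{1,1}$ is a quotient of $\Pic(Y)\otimes\NS(X)$. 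This is cleaner than your formulation; note also that your sentence about ``composing the surjection $Z^1(Y)\otimes\NS(X)\to\Pic(Y)\otimes\NS(X)$ with the last map of \eqref{eq6a}'' has the arrows going the wrong way --- what you need is that the last map factors through that surjection, or, for the statement of the theorem alone, simply that the external product on Chow groups is already defined on rational-equivalence classes, so the cycle-level map $Z^1(Y)\otimes\NS(X)\to CH^2(Y\times X)$ automatically factors. As for identifying the edge map $\bigoplus_{y}\Pic(X_{k(y)})\to CH^2(Y\times X)/\IM(CH^2(Y))$ with the cross product: no reduction to $Y$ a point or a curve is needed (nor would such a reduction say anything about general $Y$); since $\Pic(X)\iso\Pic(X_{k(y)})$, every class in the $y$-component is represented by $D|_{X_{k(y)}}$ with $D$ a divisor on $X$, and the edge map takes the closure of this cycle in $Y\times X$, which is exactly $\overline{\{y\}}\times D$. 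With these two points made precise, your argument and the paper's coincide.
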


A version of this theorem is found in Merkurjev's appendix \cite{merkpreprint}; I thank J.-L. Colliot-Th\'el\`ene for pointing out this reference.

\begin{proof} Consider the Rost spectral sequence \eqref{eqrost1} for the cycle module $M=K^M$. Since $\Pic^0_{X/k}=0$, we have $\NS(X)\iso \Pic(X_{k(y)})$ for any $y\in Y^{(1)}$, hence
\[E_2^{1,1} = \Coker(A^1(X_{k(Y)},K_2)\to Z^1(Y)\otimes \NS(X)). \]

Then the natural map $k(Y)^*\otimes \NS(X)\to A^1(X_{k(Y)}),K_2)$ realises $E_2^{1,1}$ as a quotient of $\Pic(Y)\otimes \NS(X)$. We conclude by applying Proposition \ref{p0}.
\end{proof}

Theorem \ref{t1} may be compared  with a computation of the cohomology of $Y\times X$. We  use $l$-adic cohomology, neglecting Tate twists: so $H^i(X):=\prod_{l\ne p} H^i_\et(X,\Z_l)$, where $p$ is the exponential characteristic of $k$ ($p=1$ if $\car k=0$). If $k=\C$, we have $H^i(X)\simeq H^i_B(X)\otimes\prod_{l} \Z_l$, by M. Artin's comparison theorem. We note that the choice of a rational point of $X$ gives a retraction of the map $F(Y)\to F(Y\times X)$ for any contravariant functor $F:\Sm^\proj\to \Ab$; the quotient $F(Y\times X,Y)$ is therefore a direct summand of $F(Y\times X)$. Then the K\"unneth formula gives split exact sequences
\begin{equation}\label{eq1}
0\to H^3(X)\to H^3(Y\times X,Y)\\
 \to \Tor(H^2(Y),H^2(X))\to 0
\end{equation}
and
\begin{multline}\label{eq2}
0\to H^2(Y)\otimes H^2(X)\oplus H^1(Y)\otimes H^3(X)\oplus H^4(X)\\
\to H^4(Y\times X,Y)\\
 \to \Tor(H^2(Y),H^3(X))\oplus \Tor(H^3(Y),H^2(X))\to 0.
\end{multline}

We now make the following

\begin{ass}\label{a1} $k$ is algebraically closed, $Y$ is projective and $X$ is a surface such that $CH_0(X_{k(X)})\otimes \Q\iso \Q$. 
\end{ass}

 Recall that, then, $\Albv(X)=\Pic^0_{X/k}\allowbreak=0$ and $CH^2(X)=\Z$ (Ro\v\i tman's theorem), so that Theorem \ref{t1} applies. Recall also that
\begin{align*}
H^1(X)&=0\\
  \NS(X)\otimes\hat \Z'&\iso H^2(X)\\
  H^3(X)&\simeq \Hom(\NS(X)_\tors, (\Q/\Z)')\\
  H^4(X)&=\hat\Z'
  \end{align*}
where $\hat\Z'=\prod_{l\ne p} \Z_l$. Thus \eqref{eq0} and \eqref{eq2} yield a commutative diagram
\begin{equation}\label{eq11}
\begin{CD}
&&\scriptstyle (\Pic(Y)\otimes\NS(X)\oplus \Z)\otimes \hat \Z'&\to& \scriptstyle CH^2(Y\times X,Y)\otimes \hat \Z'&\to& \scriptstyle \frac{CH^2(X_{k(Y)})}{CH^2(X)}\otimes \hat \Z' &\to& \scriptstyle 0\\
&&@V\psi VV @V{\cl^2_{Y\times X,Y}}VV @V\phi VV\\
\scriptstyle 0&\to&\scriptstyle H^2(Y)\otimes H^2(X)\oplus H^1(Y)\otimes H^3(X)\oplus \hat\Z'&\by{\theta}& \scriptstyle H^4(Y\times X,Y)&\to& \scriptstyle\Tor(H^2(Y),H^3(X))\oplus \Tor(H^3(Y),H^2(X))&\to& \scriptstyle 0.
\end{CD}
\end{equation}

An obvious generalisation of the exact sequence \eqref{eq6} boils down to an isomorphism
\[CH_0(X_{k(Y)})_0\iso CH_0(X_{k(Y)})/CH_0(X).\]

In \eqref{eq11}, the left vertical map $\psi$ is diagonal; its cokernel is 
\[\Coker \psi =H^2_\tr(Y)\otimes \NS(X)\oplus H^1(Y)\otimes H^3(X)\]
where $H^2_\tr(Y):=\Coker \cl^1_Y$, and its kernel is $\Pic^0(Y)\otimes \NS(X)\otimes \hat \Z'$
(we use here that $H^2_\tr(Y)$ is torsion-free). The snake lemma thus yields an exact sequence
\begin{multline}\label{eq9}
\Pic^0(Y)\otimes \NS(X)\otimes \hat \Z'\by{\alpha} \Ker \cl^2_{Y\times X,Y} \by{\beta} \Ker\phi\\
\to H^2_\tr(Y)\otimes \NS(X)\oplus H^1(Y)\otimes H^3(X)\by{\gamma} \Coker \cl^2_{Y\times X,Y} \to \Coker \phi\to 0.
\end{multline}

To go further, we use étale motivic cohomology as in \cite{cycletale};  the cycle class map $\cl^2_{X\times X}$ extends to an \'etale cycle class map \cite[(3-1)]{cycletale}:
\[
\tilde \cl^2_{Y\times X,Y}: H^4_\et(Y\times X,Y,\Z(2))\otimes \hat \Z'\to H^4(Y\times X,Y).
\]

\begin{thm}\label{p2} Under Assumption \ref{a1}, $\Ker \cl^2_{Y\times X,Y}$ and $\Ker\tilde \cl^2_{Y\times X,Y}$ are torsion-free; the exact sequence \eqref{eq9} yields a surjection
\[\Pic^0(Y)\otimes \NS(X)\otimes \hat \Z'\Surj \Ker \cl^2_{Y\times X,Y} \]
and an exact sequence of finite groups
\begin{multline}\label{eq7}
0\to \Ker\phi
\to H^2_\tr(Y)\otimes \NS(X)_\tors\oplus H^1(Y)\otimes H^3(X)\\
\to H^3_\nr(Y\times X,Y;(\Q/\Z)'(2))\to \Coker \phi\to 0
\end{multline}
where $H^3_\nr(Y\times X,Y;(\Q/\Z)'(2)):=\colim_{(m,p)=1}H^3_\nr(Y\times X,Y;\mu_m^{\otimes 2})$. In particular, $CH_0(X_{k(Y)})/CH_0(X)[1/p]\simeq CH_0(X_{k(Y)})_\tors[1/p]$ is finite.
\end{thm}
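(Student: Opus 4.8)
The plan is to feed the weight-two étale motivic cohomology of \cite{cycletale} into the six-term exact sequence \eqref{eq9}, which the snake lemma has already produced from \eqref{eq11}. Throughout I would apply the cycle-class comparison to the smooth projective variety $Z:=Y\times X$, systematically passing to the $Y$-relative direct summand (split off by a rational point of $X$, which exists as $k$ is algebraically closed). Three things must be extracted: torsion-freeness of $\Ker\cl^2_{Y\times X,Y}$ and $\Ker\tilde\cl^2_{Y\times X,Y}$; the vanishing of the map $\beta$ in \eqref{eq9} (equivalently, surjectivity of $\alpha$); and the finiteness. For the torsion-freeness: $\cl^2_Z$ factors as $CH^2(Z)\otimes\hat\Z'\to H^4_\et(Z,\Z(2))\otimes\hat\Z'\by{\tilde\cl^2_Z}H^4(Z)$, and by Beilinson--Lichtenbaum in weight $2$ (Merkurjev--Suslin, Voevodsky--Rost) the first arrow is the injection $CH^2(Z)=H^4_{\mathcal M}(Z,\Z(2))\hookrightarrow H^4_\et(Z,\Z(2))$ tensored with the flat module $\hat\Z'$; hence it suffices to see $\Ker\tilde\cl^2_Z$ is torsion-free, and then $\Ker\cl^2_Z$ embeds in it. For $\tilde\cl^2_Z$ I would invoke the comparison in \cite{cycletale} between weight-two étale motivic cohomology and continuous $\ell$-adic cohomology: it identifies the torsion of $H^4_\et(Z,\Z(2))$ with a quotient of $H^3_\et(Z,(\Q/\Z)'(2))$ on which $\tilde\cl^2_Z$ is injective (no nonzero torsion class dies $\ell$-adically). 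Passing to $Y$-relative summands gives the first assertion.

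Next, $\beta=0$. Since $CH_0(X_K)\otimes\Q\iso\Q$ for every extension $K/k$, the group $CH_0(X_{k(Y)})/CH_0(X)$ is torsion — its tensor with $\Q$ is the cokernel of an isomorphism — hence so is its subgroup $\Ker\phi$. On the other hand $\IM\alpha=\Ker\beta$ is divisible, being a quotient of $\Pic^0(Y)\otimes\NS(X)\otimes\hat\Z'$. Thus $\Ker\cl^2_{Y\times X,Y}$ is an extension of the torsion group $\IM\beta\subseteq\Ker\phi$ by the divisible (hence injective) group $\IM\alpha$; this extension splits, and since $\Ker\cl^2_{Y\times X,Y}$ is torsion-free we get $\IM\beta=0$. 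Hence $\beta=0$ and $\alpha$ is surjective, which is the stated surjection $\Pic^0(Y)\otimes\NS(X)\otimes\hat\Z'\Surj\Ker\cl^2_{Y\times X,Y}$. (Note this also makes the connecting map $\partial\colon\Ker\phi\to H^2_\tr(Y)\otimes\NS(X)\oplus H^1(Y)\otimes H^3(X)$ in \eqref{eq9} injective, by exactness at $\Ker\phi$.)

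For the finite exact sequence, \eqref{eq9} now collapses to $0\to\Ker\phi\by{\partial}\Coker\psi\by{\gamma}\Coker\cl^2_{Y\times X,Y}\to\Coker\phi\to0$ with $\Coker\psi=H^2_\tr(Y)\otimes\NS(X)\oplus H^1(Y)\otimes H^3(X)$. By \eqref{eq0} the non-torsion algebraic classes in $H^4(Y\times X,Y)$ are precisely products of divisors, so $\Coker\cl^2_{Y\times X,Y}$ is the direct sum of its torsion subgroup and a free part which $\gamma$ maps isomorphically onto $H^2_\tr(Y)\otimes\NS(X)_{\mathrm{free}}$ (using that $H^2_\tr(Y)$ is torsion-free), while its torsion subgroup is identified by \cite{cycletale} with $H^3_\nr(Y\times X,Y;(\Q/\Z)'(2))$. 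As $\Ker\phi$ is torsion, $\partial(\Ker\phi)=\Ker\gamma$ lies in the torsion of $\Coker\psi$, namely $H^2_\tr(Y)\otimes\NS(X)_\tors\oplus H^1(Y)\otimes H^3(X)$, and the free summands cancel; so \eqref{eq9} becomes exactly \eqref{eq7}. Finally $\NS(X)_\tors$ and $H^3(X)=\Hom(\NS(X)_\tors,(\Q/\Z)')$ are finite, which forces the second and fourth terms of \eqref{eq7} to be finite, together with the Künneth group $\Tor(H^2(Y),H^3(X))\oplus\Tor(H^3(Y),H^2(X))$ of which $\Coker\phi$ is a quotient; by exactness all four terms of \eqref{eq7}, in particular $\Ker\phi$, are finite. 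Then $CH_0(X_{k(Y)})/CH_0(X)\otimes\hat\Z'$ — extension of a subgroup of the finite group $C'$ by $\Ker\phi$ — is finite; being torsion, this group is $CH_0(X_{k(Y)})/CH_0(X)[1/p]$, and by the isomorphism $CH_0(X_{k(Y)})_0\iso CH_0(X_{k(Y)})/CH_0(X)$ recorded after \eqref{eq11} (with $CH_0(X)=\Z$) it equals $CH_0(X_{k(Y)})_\tors[1/p]$.

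The step I expect to be genuinely hard — and where I would rely wholly on \cite{cycletale} — is the pair of étale-motivic inputs: that the $\ell$-adic cycle class is injective on the torsion of $H^4_\et(Z,\Z(2))$ (giving torsion-freeness of $\Ker\tilde\cl^2_Z$), and that the torsion of $\Coker\cl^2_{Y\times X,Y}$ is the unramified group $H^3_\nr(Y\times X,Y;(\Q/\Z)'(2))$. Everything else is a diagram chase on \eqref{eq9} plus the elementary finiteness of $\NS(X)_\tors$.
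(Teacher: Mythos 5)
Your plan follows the paper's skeleton (the snake sequence \eqref{eq9}, torsion-freeness of $\Ker\cl^2_{Y\times X,Y}$ via weight-two \'etale motivic cohomology, killing $\beta$ by divisibility, then identifying the torsion of the cokernel with unramified cohomology), and your argument that $\IM\beta=0$ is exactly the paper's. But the first step has a genuine gap. The principle you invoke --- that no nonzero torsion class of $H^4_\et(Z,\Z(2))$ dies under the $\ell$-adic cycle map --- is not a result of \cite{cycletale} and is false for a general smooth projective $Z$: already for $Z=C\times\P^2$ with $C$ a curve of positive genus, $CH^2(Z)$ contains the divisible torsion of $\Pic^0(C)$, which injects into $H^4_\et(Z,\Z(2))$ (by the very Beilinson--Lichtenbaum injectivity you use), survives $\otimes\hat \Z'$, and is killed by the cycle class; so $\Ker\tilde\cl^2_Z$ is far from torsion-free. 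Classes of the same type, namely products $t\times D$ with $t\in\Pic^0(Y)_\tors$ and $D\in\NS(X)$, live in the $Y$-relative summand of $CH^2(Y\times X)$, so ``passing to $Y$-relative summands'' is not a formality: the entire content of this step is to show that, under Assumption \ref{a1}, such torsion cannot survive in $\Ker\tilde\cl^2_{Y\times X,Y}$. The paper does this through diagram \eqref{eq10}: every torsion class of $H^4_\et(Y\times X,Y,\Z(2))\otimes\hat\Z'$ lifts to $H^3_\et(Y\times X,Y,(\Q/\Z)'(2))$, and the right-hand vertical map there is injective because $H^3(Y\times X,Y)$ is a torsion group, which is where \eqref{eq1} --- hence the hypothesis that $X$ is a surface with $H^1(X)=0$ and $H^3(X)$ finite --- enters. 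You never identify, let alone prove, this input, so the torsion-freeness on which everything else in your argument rests is not established.

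There is a second gap at the end. The identification of $(\Coker\cl^2_{Y\times X,Y})_\tors$ with $H^3_\nr(Y\times X,Y;(\Q/\Z)'(2))$ is not ``identified by \cite{cycletale}'': \cite[Th. 1.1]{cycletale} (like Colliot-Th\'el\`ene--Voisin \cite{ctv}) gives only the surjection \eqref{eq12}, with divisible kernel. The paper kills that kernel by observing that $Z\mapsto H^3_\nr(Y\times Z,Y;(\Q/\Z)'(2))$ is a normalised motivic birational invariant, so the group is of exponent dividing $\Tor(X)$ and admits no nonzero divisible subgroup; this is a second, essential use of the triviality of the birational motive of $X$, absent from your write-up. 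Likewise, your passage from the four-term sequence \eqref{eq13} to \eqref{eq7} rests on the claim that $\gamma$ carries a free complement isomorphically onto $\Coker\cl^2_{Y\times X,Y}$ modulo torsion ``by \eqref{eq0}''; but \eqref{eq0} describes generators of $CH^2(Y\times X)$ and does not show that the composite of $\gamma$ with the projection $\Coker\cl^2_{Y\times X,Y}\to\Coker\cl^2_{Y\times X,Y}/\tors$ is surjective. The paper proves this by a separate argument: choose a retraction $\rho$ of $\theta$ in \eqref{eq11}, use that $CH^2(X_{k(Y)})/CH^2(X)$ is torsion, and that $\gamma\otimes\Q$ is an isomorphism. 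Your remaining diagram chase and the finiteness bookkeeping are correct and agree with the paper, but these two missing arguments (plus the first-step input above) are precisely the substantive points of the proof.
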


\begin{proof} This proof is ugly, mainly because the Leray spectral sequence for \'etale motivic cohomology relative to the projection $(Y\times X,Y)\to Y$ does not behave as well as the spectral sequence \eqref{eqrost1}. So, instead of comparing directly étale motivic and $l$-adic cohomology, we have to wiggle through.   

We have a commutative diagram
\begin{equation}\label{eq10}
\begin{CD}
H^3_\et(Y\times X,Y,(\Q/\Z)'(2)) @>\sim>> \colim_{(m,p)=1} H^3_\et(Y\times X,Y,\mu_m^{\otimes 2})\\
@VVV @VVV\\
H^4_\et(Y\times X,Y,\Z(2))\otimes \hat \Z'@>\tilde \cl^2_{Y\times X,Y}>> H^4(Y\times X,Y)
\end{CD}
\end{equation}
in which the right vertical map is injective, because $H^3(Y\times X,Y)$ is torsion by \eqref{eq1}. Thus $\Ker\tilde \cl^2_{Y\times X,Y}$ is torsion-free, and so is its subgroup $\Ker \cl^2_{Y\times X,Y}$. But the image of $\alpha$ in \eqref{eq9} is divisible, hence a direct summand. Therefore the image of $\beta$ is torsion-free, hence $0$. So we get the surjection promised in the theorem, and an exact sequence
\begin{multline}\label{eq13}
0\to \Ker\phi
\to H^2_\tr(Y)\otimes \NS(X)\oplus H^1(Y)\otimes H^3(X)\\
\to \Coker \cl^2_{Y\times X,Y} \to \Coker \phi\to 0.
\end{multline}

As a consequence, $\Ker\phi$ is finitely generated; since it is torsion it must be finite, hence  $CH_0(X_{k(Y)})/CH_0(X)[1/p]$ is finite. 

We now deduce from  \cite[Th. 1.1]{cycletale} the following surjection:
\begin{equation}\label{eq12}
H^3_\nr(Y\times X,Y;(\Q/\Z)'(2))\Surj (\Coker \cl^2_{Y\times X,Y})_\tors
\end{equation} 
(if $k=\C$, this is due to Colliot-Thélène--Voisin \cite[Th. 3.7]{ctv}, with Betti cohomology instead of $l$-adic cohomology). This map has divisible kernel; however, $Z\mapsto H^3_\nr(Y\times Z,Y;(\Q/\Z)'(2))$ is a normalised motivic birational invariant, hence $H^3_\nr(Y\times X,Y;(\Q/\Z)'(2))$ is killed by $\Tor(X)$ and therefore finite; so \eqref{eq12} is an isomorphism.

Let $M=\Coker\cl^2_{Y\times X,Y}/\tors$; by \cite[Cor. 3.5]{cycletale}, this is actually $\Coker\tilde \cl^2_{Y\times X,Y}$, although we won't use it. The composition of the map $\gamma$ of \eqref{eq9} with the projection $p:\Coker\cl^2_{Y\times X,Y}\to M$ has image isomorphic to $H^2_\tr(Y)\otimes(\NS(X)/\tors)$. 

I claim that $p\circ \gamma$ is surjective. To see this, choose a retraction $\rho$ of the map $\theta$ in Diagram \eqref{eq11}; composing $\rho\circ \cl^2_{Y\times X,Y}$ with the projection to $\Coker \psi$, we get an induced map 
\[CH^2(X_{k(Y)})/CH^2(X)\otimes \hat{\Z}'\to \Coker \psi\] 
whose composition with $\Coker\psi\to H^2_\tr(Y)\otimes(\NS(X)/\tors)$ is $0$ since $CH^2(X_{k(Y)})/CH^2(X)$ is torsion. This shows that $\rho$ induces a map
\[\bar \rho:\Coker  \cl^2_{Y\times X,Y}\to H^2_\tr(Y)\otimes(\NS(X)/\tors)\]
factoring through a left inverse of the inclusion $H^2_\tr(Y)\otimes(\NS(X)/\tors)\allowbreak\inj M$ induced by $\gamma$. But $\gamma\otimes \Q$ is an isomorphism, since $\Ker\phi$ and $\Coker \phi$ are torsion; therefore $p\circ \gamma$ is surjective as claimed.

Chasing in \eqref{eq13} with this information and using the isomorphism \eqref{eq12} now yields the exact sequence \eqref{eq7}.
\end{proof} 

\begin{cor}\label{c1} a) Suppose $Y=X$. Then we have a commutative diagram of short exact sequences:
\[\begin{CD}
0&\to& CH^2(X\times X)\otimes \hat\Z' @>\cl^2_{X\times X}>> H^4(X\times X)&\to& H^3_\nr(X\times X,(\Q/\Z)'(2))&\to& 0\\
&& @VVV @VVV ||\\
0&\to&  CH^2(X_{k(X)})/CH^2(X)[1/p] @>\phi >> \Tor(H^2(X),H^3(X))^2&\to& H^3_\nr(X\times X,(\Q/\Z)'(2))&\to& 0.
\end{CD}\]
b) In particular, the first map of \eqref{eq0} (for $Y=X$) has $p$-primary torsion kernel, and $\Tor^p(X)=\exp^p(\NS(X)_\tors)$.
\end{cor}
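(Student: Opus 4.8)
The plan is to specialise Theorem~\ref{p2} to $Y=X$ and then identify every term, using that $X$ is a surface with trivial birational motive over an algebraically closed field. First I would record the simplifications forced by Assumption~\ref{a1} applied to the pair $(X,X)$: by the recollections following it, $H^1(X)=0$, $\Pic^0_{X/k}=0$, and — the decisive point — $H^2_\tr(X)=\Coker(\cl^1_X)=0$, since the cycle class map $\cl^1_X\colon\Pic(X)\otimes\hat\Z'=\NS(X)\otimes\hat\Z'\to H^2(X)$ is an isomorphism. Hence both summands $H^2_\tr(X)\otimes\NS(X)_\tors$ and $H^1(X)\otimes H^3(X)$ in the middle of the exact sequence \eqref{eq7} (taken with $Y=X$) vanish, so that \eqref{eq7} collapses to $\Ker\phi=0$ together with an isomorphism $H^3_\nr(X\times X,X;(\Q/\Z)'(2))\iso\Coker\phi$; similarly the surjection of Theorem~\ref{p2} and $\Pic^0(X)=0$ give $\Ker\cl^2_{X\times X,X}=0$.

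Next I would assemble part a). Splitting off the contribution of a rational point of $X$ provides compatible decompositions $CH^2(X\times X)=CH^2(X)\oplus CH^2(X\times X,X)$ and $H^4(X\times X)=H^4(X)\oplus H^4(X\times X,X)$, under which $\cl^2_{X\times X}=\cl^2_X\oplus\cl^2_{X\times X,X}$ with $\cl^2_X\colon CH^2(X)\otimes\hat\Z'=\hat\Z'\iso H^4(X)=\hat\Z'$; combined with the previous paragraph this shows $\cl^2_{X\times X}$ is injective and $\Coker\cl^2_{X\times X}=\Coker\cl^2_{X\times X,X}$. I would then note $H^3_\nr(X,(\Q/\Z)'(2))=0$, being a subgroup of $H^3(k(X),(\Q/\Z)'(2))$, which vanishes because $k(X)$ has cohomological dimension $2$ away from $\car k$, being the function field of a surface over an algebraically closed field; hence $H^3_\nr(X\times X,X;(\Q/\Z)'(2))=H^3_\nr(X\times X,(\Q/\Z)'(2))$. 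As \eqref{eq13} with $Y=X$ reads $\Coker\cl^2_{X\times X,X}\iso\Coker\phi$, chaining the isomorphisms shows both rows of the displayed diagram are short exact, with common right-hand term $H^3_\nr(X\times X,(\Q/\Z)'(2))$; the vertical maps are the projections attached to the rational-point splittings (the identity on the right), commutativity of the left square is the definition of $\phi$ in \eqref{eq11}, and commutativity of the right square follows from that of the left one together with surjectivity of the middle vertical map, the identifications of the right-hand terms being precisely those produced in the proof of Theorem~\ref{p2}.

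For part b), injectivity of $\phi$ embeds $CH^2(X_{k(X)})/CH^2(X)[1/p]=CH_0(X_{k(X)})/CH_0(X)[1/p]$ into $\Tor(H^2(X),H^3(X))^2$. As $\Tor$ of abelian groups only sees torsion, and $H^2(X)=\NS(X)\otimes\hat\Z'$ has torsion subgroup $G:=\NS(X)_\tors[1/p]$ while $H^3(X)=\Hom(G,\Q/\Z)$, one gets $\Tor(H^2(X),H^3(X))\simeq\Tor(G,\Hom(G,\Q/\Z))$, a finite group of exponent $\exp(G)=\exp^p(\NS(X)_\tors)$. Thus $\exp^p(\NS(X)_\tors)$ kills $CH_0(X_{k(X)})/CH_0(X)[1/p]$, in particular the class $\eta_X$; since this group is finite (Theorem~\ref{p2}) and $\eta_X$ has order $\Tor(X)$ in $CH_0(X_{k(X)})/CH_0(X)$ (Lemma~\ref{l4}), it has order $\Tor^p(X)$ after inverting $p$, whence $\Tor^p(X)\mid\exp^p(\NS(X)_\tors)$; the reverse divisibility is Proposition~\ref{p1}, so $\Tor^p(X)=\exp^p(\NS(X)_\tors)$. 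Finally, tensoring \eqref{eq0} (with $Y=X$) by the flat ring $\hat\Z'$ and applying $\cl^2_{X\times X}$, the Künneth decomposition identifies the image of the first map with the direct summand $H^4(X)\oplus\bigl(H^2(X)\otimes H^2(X)\bigr)\oplus H^4(X)$ of $H^4(X\times X)$, on which $\cl^2_X$ and $\cl^1_X\otimes\cl^1_X$ are isomorphisms; so the first map of \eqref{eq0} is injective after $\otimes\hat\Z'$, and as its source is finitely generated, its kernel is $p$-primary torsion.

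The step I expect to be most delicate is the coincidence of the two descriptions of the common right-hand term $H^3_\nr(X\times X,(\Q/\Z)'(2))$, equivalently the commutativity of the right square; this comes out of the proof of Theorem~\ref{p2} (via \eqref{eq12}--\eqref{eq13}) once the comparison $H^3_\nr(X\times X,X;\cdot)=H^3_\nr(X\times X,\cdot)$ is in hand, and everything else is bookkeeping with the rational-point splittings.
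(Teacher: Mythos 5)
Your proof is correct and follows essentially the same route as the paper: specialise Theorem \ref{p2} to $Y=X$, use $\Pic^0(X)=H^1(X)=H^2_\tr(X)=0$ to collapse \eqref{eq7}/\eqref{eq13} to the injectivity of $\cl^2_{X\times X,X}$ and $\phi$ and the isomorphism onto $\Coker\phi$, and use $H^3_\nr(X,(\Q/\Z)'(2))=0$ to pass from relative to absolute unramified cohomology. Your additional bookkeeping (the rational-point splittings, the K\"unneth identification for the kernel statement, and the exponent comparison giving $\Tor^p(X)=\exp^p(\NS(X)_\tors)$) just makes explicit what the paper leaves implicit in part b).
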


\begin{proof} Indeed, we have $\Pic^0(X)=H^1(X)=H^2_\tr(X)=0$, and Theorem \ref{p2} boils down to the injectivity of $\cl^2_{X\times X,X}$ and $\phi$, plus an isomorphism $H^3_\nr(X\times X,X;(\Q/\Z)'(2)) \iso \Coker \phi$. But $H^3_\nr(X,(\Q/\Z)'(2))=0$, hence $H^3_\nr(X\times X,(\Q/\Z)'(2)) \iso H^3_\nr(X\times X,X;(\Q/\Z)'(2))$.
\end{proof}

\begin{cor}\label{c2} If $Y$ is a curve, we have a short exact sequence
\[0\to CH^2(X_{k(Y)})_\tors[1/p]\to H^1(Y)\otimes H^3(X)\to \Coker \cl^2_{Y\times X} \to 0.\]
\end{cor}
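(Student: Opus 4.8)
The plan is to specialise the exact sequence \eqref{eq13}, established in the course of proving Theorem~\ref{p2}, to the case $\dim Y=1$, where most of its terms collapse. First I would recall the cohomology of a smooth projective curve $Y$ over the algebraically closed field $k$: with the conventions of the text, $H^i(Y)$ is torsion-free for every $i$ and vanishes for $i\ge 3$, and the cycle class map $\cl^1_Y\colon \Pic(Y)\otimes\hat\Z'\to H^2(Y)\simeq \hat\Z'(-1)$ is onto (a closed point is a divisor of degree $1$ and its class generates $H^2(Y)$). Hence $H^2_\tr(Y)=\Coker\cl^1_Y=0$; moreover the target $\Tor(H^2(Y),H^3(X))\oplus \Tor(H^3(Y),H^2(X))$ of the map $\phi$ of Diagram~\eqref{eq11} vanishes, since $H^2(Y)$ is torsion-free and $H^3(Y)=0$. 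Therefore $\phi=0$, so that $\Coker\phi=0$ and $\Ker\phi$ is the whole source of $\phi$, namely $CH^2(X_{k(Y)})/CH^2(X)\otimes\hat\Z'$.

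Next I would identify this source with $CH^2(X_{k(Y)})_\tors[1/p]$. Under Assumption~\ref{a1} one has $\Albv(X)=0$, hence $CH^2(X)=\Z$ by Ro\v\i tman's theorem, and $I(X)=1$; the generalisation of \eqref{eq6} recalled after Diagram~\eqref{eq11} then gives an isomorphism $CH_0(X_{k(Y)})_0\iso CH_0(X_{k(Y)})/CH_0(X)=CH^2(X_{k(Y)})/CH^2(X)$, and by Lemma~\ref{l4} this group is killed by $\Tor(X)$, so it is torsion and therefore equals $CH_0(X_{k(Y)})_\tors$. Inverting $p$ and invoking the finiteness assertion of Theorem~\ref{p2} yields $\Ker\phi=CH^2(X_{k(Y)})/CH^2(X)\otimes\hat\Z'\simeq CH^2(X_{k(Y)})_\tors[1/p]$.

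Feeding $H^2_\tr(Y)=0$, $\Ker\phi=CH^2(X_{k(Y)})_\tors[1/p]$ and $\Coker\phi=0$ into \eqref{eq13} gives the short exact sequence
\[0\to CH^2(X_{k(Y)})_\tors[1/p]\to H^1(Y)\otimes H^3(X)\to \Coker \cl^2_{Y\times X,Y}\to 0.\]
It remains to replace $\cl^2_{Y\times X,Y}$ by $\cl^2_{Y\times X}$. Since $\dim Y=1$ we have $CH^2(Y)=0$ and $H^4(Y)=0$, so the direct-summand decompositions $CH^2(Y\times X)=CH^2(Y\times X,Y)\oplus CH^2(Y)$ and $H^4(Y\times X)=H^4(Y\times X,Y)\oplus H^4(Y)$ induced by a rational point of $X$ reduce to identities $CH^2(Y\times X)=CH^2(Y\times X,Y)$ and $H^4(Y\times X)=H^4(Y\times X,Y)$; hence $\cl^2_{Y\times X}=\cl^2_{Y\times X,Y}$ and the two cokernels coincide. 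The argument is routine and presents no genuine obstacle; the only point requiring a little care is the identification $\Ker\phi=CH^2(X_{k(Y)})_\tors[1/p]$, which rests both on the torsion-freeness of the cohomology of the curve $Y$ (forcing the target of $\phi$ to vanish) and on the finiteness statement in Theorem~\ref{p2}.
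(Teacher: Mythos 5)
Your proposal is correct and follows essentially the same route as the paper: the paper's proof is the one-line observation that for a curve $H^2_\tr(Y)=0$ and the target of $\phi$ vanishes, after which \eqref{eq13} together with the identification $CH^2(X_{k(Y)})/CH^2(X)[1/p]\simeq CH^2(X_{k(Y)})_\tors[1/p]$ from Theorem \ref{p2} gives the sequence. Your additional remarks (torsion-freeness of $H^2(Y)$, $H^3(Y)=0$, and the identification of the relative with the absolute cokernel since $CH^2(Y)=H^4(Y)=0$) merely make explicit what the paper leaves implicit.
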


\begin{proof} In this case, $H^2_\tr(Y)=0$ and the target of $\phi$ is $0$.
\end{proof}

\begin{rks}\label{r1} a) The special case $\NS(X)_\tors=0$ and $\car k=0$ of Corollary \ref{c1} b) was proven in \cite[Cor. 1.10]{auel} and \cite[Prop. 2.2]{voisin2}. As Colliot-Thélène points out, the methods of \cite{ctr} imply that for any smooth projective $k$-variety $X$ with $b^1=0$ and $b^2=\rho$, $\Ker(CH^2(X_K)\allowbreak \to CH^2(X_{\bar K}))$ is killed by $\exp(\NS(X)_\tors)\cdot \exp(\Br(X))$ (see Theorem \ref{torsion}).\\
b) In the first version of this paper, I had proven Corollaries \ref{c1} and \ref{c2} but had doubts on the finiteness of $CH_0(X_{k(Y)})_\tors$ in general. Colliot-Thélène provided a proof based on his 1991 CIME course \cite{cime}, see Theorem \ref{finitude}. This encouraged me to find a proof in the spirit of this note, and Theorem \ref{p2} is the result. Note that the group $\Theta$ appearing in \cite[Th. 7.1]{cime}  coincides with $H^4_\et(X,\Z(2))_\tors$. In this spirit, a weaker analogue of \cite[Th. 7.3]{cime} is the following fact: for any field $F$, the functor $\Sm^\proj(F)\ni Z\mapsto \Ker(H^4_\et(Z,\Z(2))\to H^4_\et(Z_{\bar F},\Z(2))$ is a normalised motivic birational invariant (indeed, the map $H^2_\et(Y,\Z(1))\to H^2_\et(Y_{\bar F},\Z(1))$ is injective for any smooth projective $Y$). As a consequence, $\Ker(H^4_\et(X,\Z(2))\to H^4_\et(X_{\bar F},\Z(2))$ is killed by $\Tor(X)$ if $X$ has a trivial birational motive.
\end{rks}

\appendix

\renewcommand{\car}{\operatorname{car}}

\renewcommand{\H}{{\mathcal H}}
\renewcommand{\L}{{\mathbb L}}
\newcommand{\oi}{\hskip1mm {\buildrel \simeq \over \rightarrow} \hskip1mm}
\newcommand{\ovF}{{\overline {\F}}}
\newcommand{\ovK}{{\overline K}}
\newcommand{\ovk}{{\overline k}}
\newcommand{\ovV}{{\overline V}}
\newcommand{\ovX}{{\overline X}}
\newcommand{\K}{{\mathcal K}}
\newcommand{\Cores}{{\operatorname{Cores}}}
\newcommand{\cont}{{\operatorname{cont}}}
\newcommand{\cd}{{\operatorname{cd }}}
\newcommand{\bir}{{\operatorname{bir}}}
\newcommand{\inv}{{\operatorname{inv}}}
\newcommand{\Tate}{{\operatorname{Tate}}}
\newcommand{\Norm}{{\operatorname{Norm}}}

\newcommand{\cqfd}
{%
\mbox{}%
\nolinebreak%
\hfill%
\rule{2mm}{2mm}%
\medbreak%
\par%
}
\newfont{\gothic}{eufb10}


\renewcommand{\qed}{{\hfill$\square$}}

\theoremstyle{theorem}
\newtheorem{theo}[thm]{Th\'{e}or\`{e}me}
\newtheorem{lem}[thm]{Lemme}
\theoremstyle{definition}
\newtheorem{defi}[thm]{D\'efinition}
\theoremstyle{remark}
\newtheorem{rema}[thm]{Remarque}
\newtheorem{remas}[thm]{Remarques}
\newtheorem{propr}[thm]{Propri\'et\'es}

\newcommand{\bthe}{\begin{theo}}
\newcommand{\ble}{\begin{lem}}
\newcommand{\bpr}{\begin{prop}}
\newcommand{\bco}{\begin{cor}}
\newcommand{\bde}{\begin{defi}}
\newcommand{\ethe}{\end{theo}}
\newcommand{\ele}{\end{lem}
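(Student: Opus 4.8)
The plan is to specialise Theorem~\ref{p2} to the case $Y=X$, observe that all the ``transcendental'' contributions vanish, and then upgrade the resulting statements about $X\times X$ relative to $X$ to the absolute statements of part~a) by splitting off the constant part of each functor via a $k$-point of $X$ (recall $k=\bar k$).

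\textbf{Part a).} Under Assumption~\ref{a1} one has $\Pic^0(X)=0$ and $H^1(X)=0$ (because $b^1(X)=0$), and $H^2_\tr(X):=\Coker\cl^1_X=0$ (because $b^2(X)=\rho(X)$; indeed $\NS(X)\otimes\hat\Z'\iso H^2(X)$). Feeding these vanishings into Theorem~\ref{p2} with $Y=X$: the surjection $\Pic^0(X)\otimes\NS(X)\otimes\hat\Z'\Surj\Ker\cl^2_{X\times X,X}$ forces $\cl^2_{X\times X,X}$ to be injective, while the exact sequence~\eqref{eq7} collapses (its middle term $H^2_\tr(X)\otimes\NS(X)_\tors\oplus H^1(X)\otimes H^3(X)$ is $0$), yielding $\Ker\phi=0$ and an isomorphism $H^3_\nr(X\times X,X;(\Q/\Z)'(2))\iso\Coker\phi$; here the target of $\phi$ specialises to $\Tor(H^2(X),H^3(X))\oplus\Tor(H^3(X),H^2(X))=\Tor(H^2(X),H^3(X))^2$, a finite group killed by $\exp^p(\NS(X)_\tors)$. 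To reach the absolute statements, recall that a $k$-point of $X$ makes the pullback $F(X)\to F(X\times X)$ split for every contravariant functor $F$ on $\Sm^\proj$, with complement the reduced group $F(X\times X,X)$; applying this to $CH^2$, $H^4$ and $H^3_\nr$, and using that $\cl^2_X$ is an isomorphism ($CH^2(X)=\Z$ by Ro\v\i tman, $H^4(X)=\hat\Z'$) while $H^3_\nr(X,(\Q/\Z)'(2))=0$ (as $\operatorname{cd}(k(X))=2$), the injectivity of $\cl^2_{X\times X,X}$ becomes injectivity of $\cl^2_{X\times X}$ and one gets $H^3_\nr(X\times X,X;(\Q/\Z)'(2))=H^3_\nr(X\times X,(\Q/\Z)'(2))$. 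The top row is then the $X\times X$ instance of the isomorphism $(\Coker\cl^2)_\tors\iso H^3_\nr$ of \cite{cycletale} (for $k=\C$, of \cite{ctv} with Betti cohomology), which applies because $\Coker\cl^2_{X\times X}$ is entirely torsion: by K\"unneth, and using $H^1(X)=0$ together with $\NS(X)\otimes\Q\iso H^2(X)\otimes\Q$, the space $H^4(X\times X)\otimes\Q$ is spanned by the classes of $\{x_0\}\times X$, $X\times\{x_0\}$ and of the products $D\times D'$ of divisors. Commutativity of the square is functoriality of the cycle class map and of the identifications above.

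\textbf{Part b).} The first map $g$ of~\eqref{eq0} for $Y=X$ has source $CH^2(X)\oplus\big(\Pic(X)\otimes\NS(X)\big)\oplus CH^2(X)$; reading the composite $\cl^2_{X\times X}\circ(g\otimes\hat\Z')$ off in the K\"unneth summands $H^4(X)\otimes H^0(X)$, $H^2(X)\otimes H^2(X)$, $H^0(X)\otimes H^4(X)$ of $H^4(X\times X)$ (the only nonzero ones, since $H^1(X)=0$), it is the direct sum of the isomorphisms $\cl^2_X$, $\cl^1_X\otimes\cl^1_X$, $\cl^2_X$, hence injective; so $\Ker g\otimes\hat\Z'=0$, and since $\Ker g$ is finitely generated it is a finite $p$-group. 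Finally, by Lemma~\ref{l4} (with $I(X)=1$) we have $\Tor^p(X)=\exp^p\!\big(CH^2(X_{k(X)})/CH^2(X)\big)$, which divides $\exp^p(\NS(X)_\tors)$ because the bottom row embeds $CH^2(X_{k(X)})/CH^2(X)[1/p]$ into $\Tor(H^2(X),H^3(X))^2$; the reverse divisibility is Proposition~\ref{p1} (equivalently: $X\mapsto\NS(X)_\tors[1/p]$ is a normalised motivic birational invariant, hence killed by $\Tor(X)$). Thus $\Tor^p(X)=\exp^p(\NS(X)_\tors)$.

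\textbf{Expected main obstacle.} The delicate point is the relative-to-absolute bookkeeping in part~a): one must make sure the transcendental terms $\Pic^0$, $H^1$, $H^2_\tr$ vanish for \emph{both} tensor factors, so that~\eqref{eq7} really degenerates to $\Ker\phi=0$ and $\Coker\phi\iso H^3_\nr$; and --- for the top row --- that $\Coker\cl^2_{X\times X}$ has no divisible part, so that the cycle-class/$H^3_\nr$ identification applies to the whole cokernel and not merely to its torsion subgroup.
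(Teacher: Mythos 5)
Your argument is correct and follows essentially the same route as the paper: Corollary \ref{c1} is obtained by specialising Theorem \ref{p2} to $Y=X$, using $\Pic^0(X)=H^1(X)=H^2_\tr(X)=0$, and then $H^3_\nr(X,(\Q/\Z)'(2))=0$ together with the rational-point splitting to pass from the relative groups to the absolute ones, with part b) read off from the diagram plus Proposition \ref{p1} exactly as you do. The only caveat concerns your direct justification of the top row: the identification $\Coker\cl^2_{X\times X}\iso H^3_\nr(X\times X,(\Q/\Z)'(2))$ should be argued as in the proof of Theorem \ref{p2} (the surjection \eqref{eq12} from \cite{cycletale} has a priori divisible kernel, which vanishes because $H^3_\nr$ is killed by $\Tor(X)$, hence finite), since the mere fact that $\Coker\cl^2_{X\times X}$ is all torsion does not by itself dispose of that divisible kernel; alternatively, the top row follows from the relative statement you already established, so no new input is needed.
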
}
\newcommand{\epr}{\end{prop}}
\newcommand{\eco}{\end{cor}}
\newcommand{\ede}{\end{defi}}

\newcommand{\F}{{\mathbb F}}

\newcommand{\Y}{{\mathcal Y}}
\newcommand{\bP}{{\mathbb P}}

\def\K{{\overline K}}
\def\k{{\overline k}}

\section{Cycles de codimension deux,  compl\'ement \`a deux anciens articles}

\hfill par Jean-Louis Colliot-Thélène

\subsection{Introduction}

On donne des cons\'equences faciles de r\'esultats \'etablis  dans \cite{ctr} (avec W.  Raskind)
et  dans  le rapport de synth\`ese \cite{cime}, en particulier dans une section o\`u je d\'eveloppais
des arguments de S. Saito et de P. Salberger.

\subsection{Notations et rappels}

Pour simplifier les \'enonc\'es, on se li\-mi\-te ici aux
vari\'et\'es d\'efinies sur un corps de caract\'eristique nulle. On note $\overline k$ une cl\^oture
alg\'ebrique de $k$.
Pour une telle $k$-vari\'et\'e $X$, suppos\'ee projective, lisse, g\'eom\'etriquement connexe
sur le corps $k$, on note $\X=X\times_{k}{\overline k}$. On note $b_{i}$
 le $i$-i\`eme nombre de Betti $l$-adique de $\X$. On sait qu'il est ind\'ependant du nombre premier $l$.
 On note $\rho$ le rang du groupe de N\'eron-Severi g\'eom\'etrique $\NS(\X)$.
 Pour tout entier $i$, on note  ici  $H^{i}(\X,\hat{\Z}(j)): = \prod_{l} H^{i}_{\et}(\X,\Z_{l}(j))$.
 Le sous-groupe de torsion $H^{i}(\X,\hat{\Z}(j))_{\tors}$ est fini. On note $e_{i}$ son exposant.
 Pour $k=\C$ le corps des complexes, $H^{i}_{Betti}(X(\C),\Z) \otimes \Z_{l} \simeq H^{i}_{\et}(X,\Z_{l})$.
On sait que l'on a un isomorphisme de groupes finis $\NS(\X)_{\tors} = H^{2}(\X,\hat{\Z}(1))_{\tors}$.
Le groupe de Brauer ${\rm Br}(\X)$  de $\X$ est extension du groupe fini $H^{3}(\X,\hat{\Z}(1))_{\tors}$
par $(\Q/\Z)^{b_{2}-\rho}$.
La condition $H^1(X,O_{X})=0$ \'equivaut \`a $b_{1}=0$.
  La condition $H^2(X,O_{X})=0$ \'equivaut (th\'eorie de Hodge) \`a $\rho= b_{2}$,
  c'est-\`a-dire \`a la finitude du groupe de Brauer de $\X$.
Pour $X$ une vari\'et\'e lisse,   on note $CH^{i}(X)$ le groupe de Chow des cycles de codimension $i$ de $X$.
 Pour $X$ une vari\'et\'e projective, on note $CH_{i}(X)$ le groupe de Chow des cycles de dimension $i$ de $X$. 
  
  \subsection{Exposant de torsion}
  
 L'\'enonc\'e suivant aurait pu \^etre inclus dans \cite{ctr}.
 Comme indiqu\'e formellement ci-dessus, l'entier $e_{i}$
 est l'an\-nu\-la\-teur de la torsion du $i$-\`eme groupe de cohomologie enti\`ere.
 
\begin{theo}\label{torsion}
 Soit $k$ un corps de caract\'eristique z\'ero.
Soit $X$ une $k$-vari\'et\'e projective, lisse, connexe, satisfaisant $X(k) \neq \emptyset$.
Supposons que le r\'eseau  $\NS(\X)/\tors$ admet une base
globalement respect\'ee par le 
groupe de Galois absolu de $k$.  

(a) Supposons $b_{1}=0$ et $\rho=b_{2}$.
Alors  le  groupe de torsion
${\rm Ker} [CH^2(X)\allowbreak \to CH^2(X_{\overline k})]$ est annul\'e 
par le produit  $e_{2}.e_{3}$, qui est aussi le produit de l'exposant de
  $\NS(\X)_{\tors}$ et de l'exposant du groupe $\Br(\X)$.
  
  (b) Si de plus $b_{3}=0$, alors  $CH^2(X)_{\tors}$ est annul\'e par $e_{2}.e_{3}.e_{4}$.
  \end{theo}

\begin{proof}[Démonstration]  Il suffit de suivre les d\'emonstrations du \S 3 de \cite{ctr}.
On note $H^{i}(k,\bullet)$ les groupes de cohomologie galoisienne.

Sous l'hypoth\`ese $H^1(X,O_{X})=0$, le th\'eor\`eme 1.8 de \cite{ctr} donne une suite exacte
de modules galoisiens
$$ 0 \to D_{0} \to H^0(\X,{\mathcal K}_{2}) \to H^{2}(\X,\hat{\Z}(1))_{\tors}  \to 0$$
o\`u  $D_{0}$ est uniquement divisible.
Le groupe $K_{2}\k$ est uniquement divisible. On a la suite exacte
$$ 0 \to H^0(\X,{\mathcal K}_{2})/K_{2}\k \to K_{2}\k(X)/K_{2}\k \to K_{2}\k(X)/H^0(\X,{\mathcal K}_{2}) \to 0.$$
Comme on a suppos\'e $X(k) \neq \emptyset$, on a  $H^1(k, K_{2}\k(X)/K_{2}\k )=0$ \cite[Theorem 1]{ct83}.
On voit alors que le groupe $H^1(k, K_{2}\k(X)/H^0(\X,{\mathcal K}_{2}))$ est un sous-groupe de
$H^2(k, H^{2}(\X,\hat{\Z}(1))_{\tors})$ et donc est annul\'e par $e_{2}$.

Sous les deux hypoth\`eses $H^2(X,O_{X})=0$ et $H^1(X,O_{X})=0$ (cette derni\`ere garantissant $Pic(\X)=\NS(\X)$),
   le th\'eor\`eme 2.12  de \cite{ctr} donne une suite exacte
 de modules galoisiens 
 $$ 0 \to D_{1} \to \NS(\X) \otimes {\overline k}^* \to H^1(\X,{\mathcal K}_{2}) \to [D_{2} \oplus H^{3}(\X,\hat{\Z}(2))_{\tors}] \to 0,$$
 o\`u $D_{1}$ et $D_{2}$ sont uniquement divisibles.
L'hypoth\`ese que l'action du groupe de Galois sur  $\NS(\X)/\tors$ est triviale assure via le th\'eor\`eme 90 de Hilbert
que l'on a $H^1(k, \NS(\X) \otimes {\overline k}^* )=0$. 
De la suite exac\-te ci-dessus on d\'eduit  que $H^1(k,H^1(\X,{\mathcal K}_{2}))$ est un sous-groupe de
$H^1(k, H^{3}(\X,\hat{\Z}(2))_{\tors})$ et donc est annul\'e par $e_{3}$.

La proposition 3.6 de \cite{ctr} fournit  une suite exacte
\begin{multline*}
 H^1(k, K_{2}\k(X)/H^0(\X,{\mathcal K}_{2})) \to {\rm Ker}[CH^2(X) \to CH^2(\X)] \\ 
 \to H^1(k,H^1(\X,{\mathcal K}_{2})).
 \end{multline*}

On voit donc que ${\rm Ker}[CH^2(X) \to CH^2(\X)]$ est annul\'e par le produit $e_{2}.e_{3}$.

 Par Bloch et Merkurjev-Suslin, $CH^2(\X)_{\tors}$ est un sous-quotient de $H^3_{\et}(\X, \Q/\Z(2))$
 \cite[Th\'eor\`eme 3.3.2]{cime}.
Si    $b_{3}=0$, alors $CH^2(\X)_{\tors}$,
 est un sous-quotient de $H^4(\X, \hat{\Z}(2))_{\tors}$,
  d'exposant $e_{4}$.  Sous les hypoth\`eses du th\'eor\`eme, on obtient  alors que
 $CH^2(X)_{\tors}$ est annul\'e par $e_{2}.e_{3}.e_{4}$.
\end{proof}

\begin{remas}

1) Soit $Y$ une vari\'et\'e projective et lisse sur le corps des complexes $\C$ satisfaisant 
les hypoth\`eses du th\'eor\`eme.
Pour tout corps $k$ contenant $\C$, le th\'eor\`eme s'applique \`a la $k$-vari\'et\'e $X=Y\times_{\C}k$.
L'hypoth\`ese sur l'action galoisienne est alors automatiquement satisfaite pour la $k$-vari\'et\'e $X$, car on a $\NS(Y) = \NS(\X)$.
 
 2) Lorque $e_{2}=1=e_{3}$,  l'\'enonc\'e (a) est le th\'eor\`eme 3.10 b) de \cite{ctr}.

3) Si $X$ est une surface,  $e_{4}=1$, et $b_{1}=b_{3}$. En outre, $e_{2}=e_{3}$.
  Sous les hypoth\`eses du th\'eor\`eme,   on trouve que le groupe $CH^2(X)_{\tors}=CH_{0}(X)_{\tors}$
 est annul\'e   par  le carrr\'e de l'exposant de la torsion de $\NS(\X)$. 
 
  \end{remas}
  
  \subsection{Finitude}
  
On  utilise  ici les notations et r\'esultats du \S 7 de  \cite{cime}.

\begin{theo}\label{conoyau}
Soient $k$ un corps de caract\'eristique z\'ero et $\overline k$ une cl\^oture alg\'ebrique.
Soit $X$ une $k$-vari\'et\'e projective et lisse, g\'eom\'etriquement int\`egre.
Notons $\X=X\times_{k}{\overline k}$.
Notons $b_{i} \in \N$  les nombres de Betti $l$-adiques de $\X$
et $\rho=rang(\NS(\X))$.
Supposons $H^1(X,O_{X})$=0, ce qui \'equivaut \`a $b_{1}=0$.
Supposons aussi  $H^2(X,O_{X})=0$, ce qui \'equivaut \`a $\rho=b_{2}$.
Supposons $b_{3}=0$.
Alors le conoyau de l'application
$$H^3_{\et}(k,\Q/\Z(2)) \oplus [H^1(X,{\mathcal K}_{2}) \otimes \Q/\Z] \to H^3_{\et}(X,\Q/\Z(2))$$
est d'exposant fini.
\end{theo}

\begin{proof}[Démonstration]  L'hypoth\`ese $b_{3}=0$ implique que le groupe
$H^3_{\et}(\X,\Q/\Z(2))$ s'identifie au groupe fini
$H^4_{\et}(\X, \hat{\Z}(2))_{\tors}$.
L'\'enonc\'e est alors une cons\'equence
imm\'ediate du Th\'eor\`eme 7.3 de   \cite{cime}, auquel je renvoie pour les notations. \end{proof}

\begin{theo}\label{factorisation}
Soient $k$ un corps de caract\'eristique  z\'ero et $\overline k$ une cl\^oture alg\'ebrique.
Soit $X$ une $k$-vari\'et\'e projective et lisse, g\'eom\'etriquement int\`egre.
Notons $\X=X\times_{k}{\overline k}$. Supposons que chacun des entiers  $b_{1}$, $b_{2}-\rho$ et $b_{3}$
associ\'es \`a $\X$
est nul. Supposons $X(k)\neq \emptyset$. Alors il existe un entier $N>0$ annulant le groupe $CH^2(X)_{\tors}$
et tel que pour tout entier $n>0$ multiple de $N$, l'application  
$$ CH^2(X)_{\tors} \to CH^2(X)/n \to H^4_{\et}(X,\mu_{n}^{\otimes 2})$$
compos\'ee de la projection naturelle et de l'application classe de cycle en cohomologie \'etale
est injective.
\end{theo}

\begin{proof}[Démonstration] Il suffit de combiner le th\'eor\`eme \ref{conoyau}   avec le th\'eor\`eme 7.2
de \cite{cime}. \end{proof}

\begin{rema}
 Si $X$ est une surface, l'hypoth\`ese $b_{3}=0$ est impliqu\'ee par $b_{1}=0$.
\end{rema}

On dit qu'un corps $k$ de caract\'eristique z\'ero est \`a cohomologie galoi\-sienne finie
si pour tout module fini galoisien $M$ sur $k$, tous les groupes de cohomologie galoisienne
  $H^{i}(k,M)$ sont finis. Parmi les corps de caract\'eristique z\'ero satisfaisant cette propri\'et\'e,
  on trouve :  les corps alg\'ebriquement clos, les corps r\'eels clos, les corps $p$-adiques,
  les corps de s\'eries formelles it\'er\'ees sur un des corps pr\'ec\'edents.

\begin{theo}\label{finitude}
Soit $k$ un corps de caract\'eristique z\'ero \`a cohomologie galoisienne finie.
Soit $K$ un corps de type fini sur $k$.
Soit $X$ une $K$-vari\'et\'e projective et lisse satisfaisant $X(K)\neq \emptyset$.
Notons $\X=X\times_{K}\K$. 
Supposons que chacun des entiers  $b_{1}$, $b_{2}-\rho$ et $b_{3}$ associ\'es \`a $\X$ est nul.
Alors le groupe $CH^2(X)_{\tors}$ est fini.
\end{theo}

\begin{proof}[Démonstration] D'apr\`es le th\'eor\`eme \ref{factorisation},  il existe un entier $n>0$ tel que
le groupe $CH^2(X)_{\tors}$ s'identifie 
\`a un sous-groupe de l'image de l'application classe de cycle
$$CH^2(X)/n \to H^4_{\et}(X,\mu_{n}^{\otimes 2}).$$
Soit $Y$ une $k$-vari\'et\'e int\`egre de corps des fonctions $K$. 
Quitte \`a res\-treindre la $k$-vari\'et\'e $Y$ \`a un ouvert non vide convenable,
il existe un $Y$-sch\'ema int\`egre,  projectif et lisse ${\mathcal X} \to Y$ dont la fibre
g\'en\'erique est la $K$-vari\'et\'e $X$.
L'application de restriction
$CH^2({\mathcal X}) \to CH^2(X)$ est surjective, et les
applications classe de cycle
$CH^2(X )/n \to H^4_{\et}(X,\mu_{n}^{\otimes 2})$
et 
$CH^2({\mathcal X})/n \to H^4({\mathcal X},\mu_{n}^{\otimes 2})$
sont compatibles.
L'image de 
$CH^2(X)/n \to H^4_{\et}(X,\mu_{n}^{\otimes 2})$
est donc dans l'image de la restriction
$ H^4({\mathcal X},\mu_{n}^{\otimes 2}) \to  H^4_{\et}(X,\mu_{n}^{\otimes 2}).$
Sous les hypoth\`eses du th\'eor\`eme, les groupes $H^{i}({W},\mu_{n}^{\otimes j})$
sont finis pour toute  vari\'et\'e $W$ de type fini sur $k$, en particulier 
$H^4({\mathcal X},\mu_{n}^{\otimes 2})$ est fini. On conclut que $CH^2(X)_{\tors}$ est fini.
\end{proof}

\begin{rema}
Si $X$ est une $K$-surface, $b_{1}=b_{3}$ et 
 l'hypoth\`ese est simplement
que $b_{1}=0$ et $b_{2}-\rho=0$, et la conclusion est que 
$CH_{0}(X)_{\tors}$ est fini.
 \end{rema}

\end{document}